\documentclass{amsart}

\oddsidemargin 6pt \evensidemargin 6pt \marginparwidth 48pt
\marginparsep 10pt
\topmargin -18pt \headheight 12pt \headsep 25pt  \footskip 30pt
\textheight 625pt \textwidth 431pt \columnsep 10pt \columnseprule 0pt

\usepackage[all]{xy}

\theoremstyle{plain}
\newtheorem{thm}{Theorem}[section]
\newtheorem{prop}[thm]{Proposition}

\newtheorem{cor}[thm]{Corollary}

\theoremstyle{definition}
\newtheorem{dfn}[thm]{Definition}

\theoremstyle{remark}

 \newcommand{\unter}[2]{\genfrac{}{}{0pt}{}{#1}{#2}}

\begin{document}

\title[Universal deformation rings and certain algebras of dihedral type]{Universal deformation rings of modules for algebras of dihedral type of polynomial growth}

\author{Frauke M. Bleher}
\address{F.B.: Department of Mathematics\\University of Iowa\\
14 MacLean Hall\\Iowa City, IA 52242-1419, U.S.A.}
\email{frauke-bleher@uiowa.edu}
\thanks{The first author was supported in part by NSA Grant H98230-11-1-0131.}
\author{Shannon N. Talbott}
\address{S.T.: Department of Mathematics\\College of Mount St. Joseph\\5701 Delhi Road\\
Cincinnati, OH 45233-1670, U.S.A.}

\subjclass[2000]{Primary 16G10; Secondary 16G20}
\keywords{Universal deformation rings, algebras of dihedral type, polynomial growth,
stable endomorphism rings}

\begin{abstract}
Let $k$ be an algebraically closed field, and let $\Lambda$ be an algebra of dihedral
type of polynomial growth as classified by Erdmann and Skowro\'{n}ski. We describe
all finitely generated $\Lambda$-modules $V$ whose stable endomorphism rings are
isomorphic to $k$ and determine their universal deformation rings $R(\Lambda,V)$.
We prove that only three isomorphism types occur for $R(\Lambda,V)$: $k$, $k[[t]]/(t^2)$
and $k[[t]]$.
\end{abstract}

\maketitle


\section{Introduction}
\label{s:intro}

Let $k$ be an algebraically closed field of arbitrary characteristic, and let $\Lambda$
be a finite dimensional algebra over $k$. Given a finitely generated $\Lambda$-module
$V$, it is a natural question to ask over which complete local commutative Noetherian 
$k$-algebras $R$ with residue field $k$ the module $V$ can be lifted. 
If $\Lambda$ is self-injective and the stable endomorphism ring of $V$ is isomorphic to $k$, 
then there exists a particular complete local commutative Noetherian $k$-algebra 
$R(\Lambda,V)$ with residue field $k$ and a particular lift $U(\Lambda,V)$ of $V$ over 
$R(\Lambda,V)$ which is universal with respect to all
isomorphism classes of lifts (i.e. deformations) of $V$ over all such $k$-algebras $R$ (see 
\cite{bv} and \S\ref{s:prelim}). 
The ring $R(\Lambda,V)$ is called the universal deformation ring of $V$ and
$U(\Lambda,V)$ is called the universal deformation of $V$. 
Since for every $R$ and every lift $M$ of $V$ over $R$ there exists a unique specialization 
morphism
$R(\Lambda,V)\to R$ with respect to which $U(\Lambda,V)$ specializes to $M$, 
the pair $(R(\Lambda,V),U(\Lambda,V))$ encompasses all 
lifts of $V$ over all complete local commutative Noetherian $k$-algebras with residue field 
$k$.

The question of lifting modules
has a long tradition when $\Lambda$ is equal to the group ring $kG$ of a finite group 
$G$ and $k$ has positive characteristic $p$. In this case, one not only studies lifts of $V$ to 
complete local commutative Noetherian $k$-algebras but to arbitrary complete local commutative 
Noetherian rings with residue field $k$, thus obtaining more information about the 
connections between characteristic $p$ and characteristic
0 representations of $G$. Using Morita equivalence classes of algebras,
the case of $kG$ is tightly linked to understanding lifts of modules over arbitrary finite
dimensional $k$-algebras $\Lambda$. In particular, if $kG$ (or a block of $kG$) is
Morita equivalent to $\Lambda$ and $V$ corresponds to $V_\Lambda$ under this Morita
equivalence, then $V$ has a universal mod $p$ deformation ring $R(G,V)/pR(G,V)$ if and only if 
$V_\Lambda$ has a universal deformation ring $R(\Lambda,V_\Lambda)$. Moreover, 
$R(\Lambda,V_\Lambda)\cong R(G,V)/pR(G,V)$. The main advantage of employing this 
connection is that one can use powerful tools from the representation theory of finite dimensional 
algebras, such as Auslander-Reiten quivers, stable equivalences, and combinatorial
descriptions of modules, just to name a few. 
This approach has recently led to the solution of various open problems. For example, it was 
successfully used in \cite{3sim,bc5,bcs} to construct group representations whose universal 
deformation rings are not complete intersections, thus answering a question posed by Flach 
\cite{flach}.

In this paper, we concentrate on algebras of dihedral type of polynomial growth.
Algebras of dihedral type are certain symmetric algebras which
played an important role in Erdmann's classification  of all tame blocks of 
group algebras with dihedral defect groups up to Morita equivalence (see \cite{erd}).
A $k$-algebra is said to be of polynomial growth if its indecomposable modules 
of any given $k$-dimension $d$ can be parameterized using only a finite number $\mu(d)$ of 
one-parameter families and if there is a natural number $m$ such that $\mu(d)\le d^m$ for
all $d\ge 1$. 

Erdmann and Skowro\'{n}ski classified in \cite[Sect. 4]{erdskow} all basic algebras
of dihedral type of polynomial growth, by providing the quiver and relations of each such
algebra. They showed that there are precisely 8 possible quivers, each of which having at most 3 
vertices, and 12 Morita equivalence classes of algebras. It follows that the
stable Auslander-Reiten quiver of each such algebra $\Lambda$ has precisely one
non-periodic component of the form $\mathbb{Z}\tilde{A}_{p,q}$ 
where $(p,q)\in\{(1,1),(3,1),(3,3)\}$, depending on whether the quiver of $\Lambda$
has 1, 2, or 3 vertices. Moreover, there are up to two
3-tubes and infinitely many 1-tubes.
Here $\tilde{A}_{p,q}$ denotes the quiver
$$\tilde{A}_{p,q} = \quad \vcenter{\xymatrix{
&\bullet\ar[r]&\cdots\cdots\ar[r]&\bullet\ar[rd]^{\alpha_p}\\
\bullet\ar[ru]^{\alpha_1}\ar[rd]_{\beta_1}&&&&\bullet\\
&\bullet\ar[r]&\cdots\cdots\ar[r]&\bullet\ar[ru]_{\beta_q}
}}$$

A summary of our main results is as follows; for more precise statements, see Propositions 
\ref{prop:nonperiodic}, \ref{prop:3tubes} and \ref{prop:1tubes}.
Note that $\Omega$ stands for the syzygy functor (see e.g. \cite[pp. 124--126]{ARS}).

\begin{thm}
\label{thm:mainudr}
Let $\Lambda=kQ/I$ be an algebra of dihedral type of polynomial growth, and
let $\mathfrak{C}$ be a connected component of the stable Auslander-Reiten quiver of 
$\Lambda$. 
\begin{itemize}
\item[(i)] If $\mathfrak{C}$ is a non-periodic component, then $\mathfrak{C}$ is stable under
$\Omega$ and the stable endomorphism ring of every module $V$ belonging to $\mathfrak{C}$ 
is isomorphic to $k$. If $\mathfrak{C}$ is of the form $\mathbb{Z}\tilde{A}_{1,1}$, then there is 
precisely one $\Omega$-orbit of $\Lambda$-modules $V$ in $\mathfrak{C}$ and
$R(\Lambda,V)\cong \Lambda$.
If $\mathfrak{C}$ is of the form $\mathbb{Z}\tilde{A}_{3,1}$, then there are precisely two 
$\Omega$-orbits of $\Lambda$-modules $V$ in $\mathfrak{C}$ and
$R(\Lambda,V)\cong k[[t]]/(t^2)$. If $\mathfrak{C}$ is of the form $\mathbb{Z}\tilde{A}_{3,3}$,
then there are precisely three $\Omega$-orbits of $\Lambda$-modules $V$ in $\mathfrak{C}$ and
$R(\Lambda,V)$ is isomorphic to either $k$ or $k[[t]]/(t^2)$.
\item[(ii)] Suppose $\mathfrak{C}$ is a $3$-tube. If $\Omega(\mathfrak{C})= \mathfrak{C}$,
then the only modules in $\mathfrak{C}$ whose stable endomorphism rings are isomorphic to $k$
are the modules 
$V$ at the boundary of $\mathfrak{C}$ and $R(\Lambda,V)\cong k$.
If $\Omega(\mathfrak{C})\neq \mathfrak{C}$, then there are precisely three $\Omega$-orbits 
of modules $V$ in $\mathfrak{C}\cup\Omega(\mathfrak{C})$ whose stable endomorphism rings 
are isomorphic to $k$ and $R(\Lambda,V)$ is isomorphic to either $k$ or $k[[t]]$. 
\item[(iii)] Suppose $\mathfrak{C}$ is a $1$-tube. If $\Omega(\mathfrak{C})= \mathfrak{C}$,
then no module in $\mathfrak{C}$ has a stable endomorphism ring isomorphic to $k$.
If $\Omega(\mathfrak{C})\neq \mathfrak{C}$, then the only module in 
$\mathfrak{C}$ whose stable endomorphism ring is isomorphic to $k$
is the module $V$ at the boundary of $\mathfrak{C}$ and $R(\Lambda,V)\cong k[[t]]$.
\end{itemize}
\end{thm}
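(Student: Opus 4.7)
The plan is to reduce the theorem to the three propositions it summarizes by exploiting the explicit classification of Erdmann--Skowro\'{n}ski. Since there are only $12$ Morita equivalence classes of basic algebras of dihedral type of polynomial growth, and Morita equivalence preserves both stable Auslander--Reiten quivers and universal deformation rings, I would first fix a list of presentations $\Lambda=kQ/I$ and analyze each one in turn, grouping the case analysis according to the three types of components: the unique non-periodic component $\mathbb{Z}\tilde A_{p,q}$, the (at most two) $3$-tubes, and the infinitely many $1$-tubes. Because every algebra of dihedral type is special biserial (in fact symmetric), every indecomposable non-projective $\Lambda$-module is a string or a band module, so throughout I can work with the combinatorial description of $V$ by a string or band word together with explicit formulas for $\mathrm{Hom}_\Lambda$ and for syzygies.

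Next I would identify the candidates $V$ whose stable endomorphism ring equals $k$. Using the string/band calculus, homomorphisms between string modules decompose into factor/submodule pairs; I would cross out the projective factorizations to compute $\underline{\mathrm{End}}_\Lambda(V)$ and select the modules where this is one-dimensional. For part (i) this gives, in each $\mathbb{Z}\tilde A_{p,q}$ component, a finite list of $\Omega$-orbits (one if $p=q=1$, two if $(p,q)=(3,1)$, three if $(p,q)=(3,3)$); for part (ii) it isolates precisely the boundary modules of the $3$-tubes, and for part (iii) the boundary strings of those $1$-tubes that are not $\Omega$-stable. This step is the main combinatorial bookkeeping, and it is where the asymmetry between $\Omega$-stable and non-$\Omega$-stable tubes appears, since a self-symmetric band/string forces non-trivial stable endomorphisms through twist maps.

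To determine $R(\Lambda,V)$ itself, I would then follow the standard Fontaine--Mazur/Bleher--Vel\'ez strategy. Since $\Lambda$ is symmetric, versal deformations exist whenever $\underline{\mathrm{End}}_\Lambda(V)=k$, and the tangent space of the deformation functor is $\mathrm{Ext}^1_\Lambda(V,V)$. Computing the minimal projective presentation of each relevant string/band module and dualizing, I would show that $\dim_k\mathrm{Ext}^1_\Lambda(V,V)\in\{0,1\}$ in every case, giving the three possibilities $R(\Lambda,V)\cong k$, $R(\Lambda,V)\cong k[[t]]/(t^2)$, or $R(\Lambda,V)\cong k[[t]]$. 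The distinction between the last two is controlled by whether a non-trivial first-order lift can be extended to a formal lift over $k[[t]]$ or meets an obstruction in $\mathrm{Ext}^2_\Lambda(V,V)$. For the $\mathbb{Z}\tilde A_{1,1}$ case, the projective cover itself provides a lift over $\Lambda$, yielding $R(\Lambda,V)\cong\Lambda\cong k[[t]]/(t^2)$ directly; for the periodic boundary cases of the non-$\Omega$-stable $1$- and $3$-tubes I would produce an explicit family of band modules parametrized by $k^\times$, whose completion at the relevant closed point is a lift over $k[[t]]$ from which universality follows by a tangent-space count.

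The main obstacle I expect is the last step, namely separating the algebras whose universal deformation ring is $k[[t]]/(t^2)$ from those whose ring is $k[[t]]$: both situations have a one-dimensional tangent space, and the dichotomy has to be detected either by exhibiting an explicit infinite family of non-isomorphic deformations (giving $k[[t]]$) or by showing that the unique first-order deformation is genuinely obstructed at order two (giving $k[[t]]/(t^2)$). Because all $\mathrm{Ext}^2$ computations must be done with string-module resolutions, which in the symmetric special biserial setting are periodic or eventually periodic, I would handle this by relating the obstruction to the $\Omega$-periodicity of $V$ and to whether $V$ lies in a genuine one-parameter family of band modules within its component; the non-trivial case distinction here is what forces the analysis to be carried out algebra-by-algebra in the three propositions rather than by a single uniform argument.
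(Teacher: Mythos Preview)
Your overall architecture matches the paper's: reduce to the Erdmann--Skowro\'{n}ski list, use the string/band calculus and Krause's homomorphism description to isolate the modules with $\underline{\mathrm{End}}_\Lambda(V)\cong k$, compute $\mathrm{Ext}^1$, and then resolve the ambiguity between $k[[t]]/(t^2)$ and $k[[t]]$ case by case. However, there are two concrete errors.

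First, your treatment of the $\mathbb{Z}\tilde A_{1,1}$ case is wrong. For $\Lambda=D(1)_c$ the quiver has a single vertex with two loops, so $\mathrm{Ext}^1_\Lambda(S_0,S_0)$ is \emph{two}-dimensional, not one-dimensional; your blanket claim that $\dim_k\mathrm{Ext}^1_\Lambda(V,V)\in\{0,1\}$ in every case is false here. Consequently $R(\Lambda,S_0)$ is a quotient of $k[[t_1,t_2]]$, and in fact $R(\Lambda,S_0)\cong\Lambda$ is four-dimensional over $k$, not isomorphic to $k[[t]]/(t^2)$. The paper handles this by observing that $\Lambda$ itself is a commutative local $k$-algebra with residue field $k$, hence an object of $\hat{\mathcal C}$, and that the regular module $\Lambda$ is a lift of $S_0$ over $\Lambda$; a direct argument with $\mathrm{GL}_1(R)$ then shows this lift is universal. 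Your ``projective cover'' heuristic points in the right direction but the identification $\Lambda\cong k[[t]]/(t^2)$ is simply incorrect.

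Second, your description of which modules in a non-$\Omega$-stable $3$-tube have $\underline{\mathrm{End}}\cong k$ is incomplete. It is \emph{not} only the boundary modules: the paper finds three $\Omega$-orbits in $\mathfrak T\cup\Omega(\mathfrak T)$, represented by a boundary module $T_0$, a successor $T_1$ of $T_0$, and a successor $T_2$ of $T_1$. The interesting deformation ring $k[[t]]$ occurs for $T_2$, which sits two steps into the tube, and the explicit $k[[t]]$-lift the paper writes down is a matrix representation rather than a one-parameter band family. Relatedly, the modules at the boundary of the $1$-tubes are band modules (possibly degenerate), not strings. Finally, note that the paper's obstruction argument for $R(\Lambda,V)\cong k[[t]]/(t^2)$ does not compute $\mathrm{Ext}^2_\Lambda(V,V)$; instead it shows $\mathrm{Ext}^1_\Lambda(M,V)=0$ for the first-order lift $M$, so that any putative lift $M_0$ over $k[[t]]/(t^3)$ splits as a $\Lambda$-module, and then checks directly that no compatible $t$-action can make $t^2M_0\neq 0$. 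Your $\mathrm{Ext}^2$ route is plausible in principle but you would need to actually carry it out, and the paper's argument is more elementary.
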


The main steps to prove Theorem \ref{thm:mainudr} are as follows: We use that for each algebra
$\Lambda$ of dihedral type of polynomial growth, $\Lambda/\mathrm{soc}(\Lambda)$
is a string algebra, which means that all non-projective indecomposable $\Lambda$-modules
are given combinatorially by string and band modules (see \cite{buri}). We use the description
of the $\Lambda$-module homomorphisms between string and band modules given in \cite{krau}
to find all $\Lambda$-modules $V$ whose stable endomorphism rings are isomorphic to $k$.
Since by \cite{bv} the universal deformation ring of $V$ is preserved by the syzygy functor
$\Omega$, it turns out that we only have to consider finitely many isomorphism classes of 
string modules $V$ whose stable endomorphism rings are isomorphic to $k$. 
Moreover, there is precisely one band attached to each $\Lambda$, which means that
we only have to determine for which $\mu\in k^*$ the 
stable endomorphism ring of the band module $V$ which is associated to $\mu$ and
which lies at the boundary of its 1-tube is isomorphic to $k$.
We then compute the $k$-dimension of
$\mathrm{Ext}^1_\Lambda(V,V)$ for each obtained string or band module
$V$ and use this to determine the 
isomorphism type of the universal deformation ring $R(\Lambda,V)$.

Let $G$ be a finite group. 
In \cite[Thm. 4.1]{erdskow}, the blocks $B$ of $kG$ which are representation
infinite of polynomial growth were determined. It was shown that $B$ is such a block if and only
if $\mathrm{char}(k)=2$ and the defect groups of $B$ are Klein four groups. In particular, $B$
is an algebra of dihedral type of polynomial growth. The determination of the universal 
deformation rings in Theorem \ref{thm:mainudr} enables us to give a characterization
of representation infinite blocks of polynomial growth in terms of universal deformation rings
(see Corollary \ref{cor:blockcase}). Note that the full universal deformation rings of modules
for blocks with Klein four defect groups were determined in \cite{bl}.

The paper is organized as follows. In \S\ref{s:prelim}, we recall the definitions of deformations 
and deformation rings. In \S\ref{s:dihedral}, we describe the quivers and relations of all
basic algebras $\Lambda$ of dihedral type of polynomial growth provided in
\cite[Sect. 4]{erdskow} and prove Theorem \ref{thm:mainudr}. In \S\ref{s:stringband}, we
give a brief introduction to string and band modules as given in \cite{buri}.
 
Part of this paper constitutes the Ph.D. thesis of the second author under the supervision
of the first author \cite{talbott}.


\section{Versal and universal deformation rings}
\label{s:prelim}
\setcounter{equation}{0}

In this section, we give a brief introduction to versal and universal deformation rings and deformations. For more background material, we refer the reader to \cite{bv}.

Let $k$ be a field of arbitrary characteristic. Let $\hat{\mathcal{C}}$ be the 
category of all complete local commutative Noetherian $k$-algebras with residue field $k$. The 
morphisms in $\hat{\mathcal{C}}$ are continuous $k$-algebra homomorphisms which induce 
the identity map on $k$. 

Suppose $\Lambda$ is a finite dimensional $k$-algebra and $V$ is a finitely generated
$\Lambda$-module. A lift of $V$ over an object $R$ in $\hat{\mathcal{C}}$ is a finitely generated 
$R\otimes_k \Lambda$-module $M$ which is free over $R$ together with a $\Lambda$-module 
isomorphism $\phi:k\otimes_R M\to V$. Two lifts 
$(M,\phi)$ and $(M',\phi')$ of $V$ over $R$ are isomorphic if there exists an 
$R\otimes_k\Lambda$-module isomorphism  $f:M\to M'$ such that 
$\phi'\circ(k\otimes_R f) = \phi$. The isomorphism class of a lift $(M,\phi)$ of $V$ 
over $R$ is denoted by $[M,\phi]$ and called a deformation of $V$ over $R$. 
We denote the set of all such deformations of $V$ over $R$ by $\mathrm{Def}_{\Lambda}(V,R)$. 
The deformation functor 
$$\hat{F}_V:\hat{\mathcal{C}} \to \mathrm{Sets}$$ 
is the covariant functor which
sends an object $R$ in $\hat{\mathcal{C}}$ to $\mathrm{Def}_{\Lambda}(V,R)$ and a morphism 
$\alpha:R\to R'$ in $\hat{\mathcal{C}}$ to the map $\mathrm{Def}_{\Lambda}(V,R) \to
\mathrm{Def}_{\Lambda}(V,R')$ defined by $[M,\phi]\mapsto [R'\otimes_{R,\alpha} M,\phi_\alpha]$, 
where  $\phi_\alpha=\phi$ after identifying $k\otimes_{R'}(R'\otimes_{R,\alpha} M)$ with 
$k\otimes_R M$.

Suppose there exists an object $R(\Lambda,V)$ in $\hat{\mathcal{C}}$ and a deformation 
$[U(\Lambda,V),\phi_U]$ of $V$ over $R(\Lambda,V)$ with the following property:
For each $R$ in $\hat{\mathcal{C}}$ and for each lift $(M,\phi)$ of $V$ over $R$ there exists 
a morphism $\alpha:R(\Lambda,V)\to R$ in $\hat{\mathcal{C}}$ such that 
$\hat{F}_V(\alpha)([U(\Lambda,V),\phi_U])=[M,\phi]$, and moreover $\alpha$ is unique if 
$R$ is the ring of dual numbers $k[\epsilon]/(\epsilon^2)$. Then $R(\Lambda,V)$ is called the 
versal deformation ring of $V$ and 
$[U(\Lambda,V),\phi_U]$ is called the versal deformation of $V$. If the morphism $\alpha$ is
unique for all $R$ and all lifts $(M,\phi)$ of $V$ over $R$, 
then $R(\Lambda,V)$ is called the universal deformation ring of $V$ and $[U(\Lambda,V),\phi_U]$ 
is called the universal deformation of $V$. In other words, $R(\Lambda,V)$ is universal if and only 
if $R(\Lambda,V)$ represents the functor $\hat{F}_V$ in the sense that $\hat{F}_V$ is naturally 
isomorphic to the Hom functor $\mathrm{Hom}_{\hat{\mathcal{C}}}(R(\Lambda,V),-)$. 

Note that the above definition of deformations can be weakened as follows.
Given a lift $(M,\phi)$ of $V$ over a ring $R$ in 
$\hat{\mathcal{C}}$, define the corresponding weak deformation to be
the isomorphism class of $M$ as an $R\otimes_k\Lambda$-module, without taking into account 
the specific isomorphism $\phi:k\otimes_RM\to V$. 
In general, a weak deformation of $V$ over $R$ identifies more lifts than a deformation of $V$ 
over $R$ that respects the isomorphism $\phi$ of a representative $(M,\phi)$.
However, if $\Lambda$ is self-injective and the stable endomorphism ring 
$\underline{\mathrm{End}}_{\Lambda}(V)$ is isomorphic to $k$, these two definitions
of deformations coincide (see  \cite[Thm. 2.6]{bv}).

It is straightforward to check that every finitely generated $\Lambda$-module $V$ has a 
versal deformation ring and that this versal deformation ring is universal if the endomorphism
ring $\mathrm{End}_\Lambda(V)$ is isomorphic to $k$ (see \cite[Prop. 2.1]{bv}). 
Moreover, Morita equivalences preserve versal deformation rings (see \cite[Prop. 2.5]{bv}).
If $\Lambda$ is self-injective, we obtain the following result, where $\Omega$ denotes the 
syzygy functor (see e.g. \cite[pp. 124--126]{ARS}).

\begin{thm}
\label{thm:udrvelez}
{\rm (\cite[Thm. 2.6]{bv})}
Let $\Lambda$ be a finite dimensional self-injective $k$-algebra,
and suppose $V$ is a finitely generated $\Lambda$-module whose stable endomorphism ring 
$\underline{\mathrm{End}}_{\Lambda}(V)$ is isomorphic to $k$.  
\begin{itemize}
\item[(i)] The module $V$ has a universal deformation ring $R(\Lambda,V)$. 
 \item[(ii)] If $P$ is a finitely generated projective $\Lambda$-module, then
$\underline{\mathrm{End}}_{\Lambda}(V\oplus P)\cong k$ and 
$R(\Lambda,V)\cong R(\Lambda,V\oplus P)$.
\item[(iii)] If $\Lambda$ is moreover a
Frobenius algebra, then $\underline{\mathrm{End}}_{\Lambda}(\Omega(V))\cong k$ and 
$R(\Lambda,V)\cong R(\Lambda,\Omega(V))$.
\end{itemize}
\end{thm}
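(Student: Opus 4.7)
The plan is to handle the three parts in sequence.

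For (i), I would verify Schlessinger's criteria for the deformation functor $\hat{F}_V$, with the crucial preliminary step of showing that the hypothesis $\underline{\mathrm{End}}_\Lambda(V) \cong k$ forces weak deformations and (pointed) deformations of $V$ over any $R \in \hat{\mathcal{C}}$ to coincide. Since $\Lambda$ is self-injective, the natural surjection $\mathrm{End}_\Lambda(V) \to \underline{\mathrm{End}}_\Lambda(V) \cong k$ shows that every $\Lambda$-module endomorphism of $V$ is, modulo maps factoring through a projective, a scalar; any $R \otimes_k \Lambda$-linear isomorphism $M \to M'$ between two lifts can then be adjusted on the special fiber so as to intertwine the rigidifying isomorphisms $\phi, \phi'$. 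With this identification in place I would check the Schlessinger conditions: the fibered-product half-exactness follows from $R$-flatness of lifts and a direct gluing construction over $R_1 \times_{R_0} R_2$; the tangent space $\hat{F}_V(k[\epsilon]/(\epsilon^2))$ is naturally identified with the finite-dimensional $k$-vector space $\mathrm{Ext}^1_\Lambda(V,V)$; and the universality condition holds because $\underline{\mathrm{End}}_\Lambda(V) \cong k$ ensures that automorphisms of $V$ act trivially on isomorphism classes of lifts modulo the projective-factoring part that has already been absorbed.

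For (ii), the key input is that projective modules lift uniquely from $k$ to any $R$ in $\hat{\mathcal{C}}$: since $R$ is complete local, idempotents lift uniquely along the surjection $R \otimes_k \Lambda \to \Lambda$, so any lift of a projective $\Lambda$-module $P$ is isomorphic to $R \otimes_k P$. Given a lift $(N, \psi)$ of $V \oplus P$, lifting the idempotent that projects onto the $P$-summand produces a decomposition $N \cong M \oplus (R \otimes_k P)$ in which $M$ is a lift of $V$. This yields a bijection $\mathrm{Def}_\Lambda(V \oplus P, R) \to \mathrm{Def}_\Lambda(V, R)$ which is natural in $R$, so $\hat{F}_{V \oplus P} \cong \hat{F}_V$ and their representing rings agree.

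For (iii), the Frobenius hypothesis ensures that $\Omega$ is an auto-equivalence of the stable category $\underline{\mathrm{mod}}\,\Lambda$, so $\underline{\mathrm{End}}_\Lambda(\Omega V) \cong k$ and part (i) applies to $\Omega V$. To get $R(\Lambda, V) \cong R(\Lambda, \Omega V)$, I would construct a natural isomorphism $\hat{F}_V \cong \hat{F}_{\Omega V}$. Fix a projective cover $\pi \colon P \to V$ with kernel $\Omega V$. Given a lift $(M, \phi)$ of $V$ over $R$, lift $\pi$ to a surjection $\pi_R \colon R \otimes_k P \to M$; here $R \otimes_k P$ serves as the projective cover of $M$ inside the category of $R \otimes_k \Lambda$-modules that are free over $R$, by Nakayama's lemma and $R$-freeness of $M$. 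Then $\ker(\pi_R)$ is $R$-flat (hence $R$-free) and reduces to $\Omega V$ modulo $\mathfrak{m}_R$, giving the desired lift of $\Omega V$. The cosyzygy construction, available because projective $=$ injective for Frobenius algebras, provides the inverse assignment, and naturality in $R$ is routine. The main obstacle is the bookkeeping in part (i): rigorously proving the equivalence of weak and pointed deformations under the stable-endomorphism hypothesis, and identifying the tangent space with $\mathrm{Ext}^1_\Lambda(V, V)$. Both self-injectivity and the condition $\underline{\mathrm{End}}_\Lambda(V) \cong k$ are essential here, and together they are what upgrade a versal deformation ring to a universal one.
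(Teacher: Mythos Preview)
The paper does not give its own proof of this theorem: it is quoted verbatim as \cite[Thm.~2.6]{bv} and used as a black box, with no argument supplied in the present paper. So there is nothing to compare your proposal against here.

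That said, your outline is essentially the argument one finds in the cited reference. Part~(i) does proceed via Schlessinger's criteria, with the decisive point being exactly the one you isolate: under $\underline{\mathrm{End}}_\Lambda(V)\cong k$ the weak and pointed deformation functors agree, which is what upgrades versality to universality. Part~(ii) is handled by idempotent lifting along $R\otimes_k\Lambda\to\Lambda$, and part~(iii) by lifting projective covers over $R$ and using that projectives are injectives in the Frobenius case to make the construction invertible. One small caution on your sketch for~(i): the step ``adjust an $R\otimes_k\Lambda$-isomorphism on the special fiber'' needs more than just that endomorphisms of $V$ are scalars modulo projectives; one must show that any $\Lambda$-automorphism of $V$ lifts to an $R\otimes_k\Lambda$-automorphism of $M$, which in \cite{bv} is done by first lifting the projective part and then the scalar part. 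Your outline is correct in spirit but that lifting step is where the actual work lies.
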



\section{Algebras of dihedral type of polynomial growth}
\label{s:dihedral}

In this section we consider all algebras of dihedral type which are of polynomial growth. 
Let $k$ be an algebraically closed field of arbitrary characteristic. 
Algebras of dihedral type are certain symmetric algebras which
play an important role in Erdmann's classification of all tame blocks of 
group algebras with dihedral defect groups up to Morita equivalence (see \cite{erd}). Since by 
\cite{brauer2}, these tame blocks have at most three isomorphism classes of simple modules, 
this is also the case for the algebras of dihedral type. However, the class of algebras
of dihedral type strictly contains the Morita equivalence classes of these tame blocks.
A $k$-algebra is said to be of polynomial growth if it is tame, i.e. its indecomposable modules 
of any given $k$-dimension $d$ can  (up to finitely many exceptions)
be parameterized using only a finite number $\mu(d)$ of 
one-parameter families, and if there is a natural number $m$ such that $\mu(d)\le d^m$ for
all $d\ge 1$. Erdmann and Skowro\'{n}ski classified in \cite[Sect. 4]{erdskow} all algebras
of dihedral type of polynomial growth up to Morita equivalence. In particular, they showed
in \cite[Thm. 4.1]{erdskow} that the only tame blocks with dihedral defect groups which are of 
polynomial growth have Klein four defect groups. The latter fall into three distinct Morita
equivalence classes of algebras with either exactly one or exactly three isomorphism classes
of simple modules. 

Let $\Lambda=kQ/I$ be an algebra of dihedral type of polynomial growth. Then $Q$ is
one of 8 possible quivers listed in Figure \ref{fig:quivers}. For each $Q$, Erdmann and
Skowro\'{n}ski provide either one or two ideals $I$ of $kQ$ to obtain a complete list of Morita 
equivalence classes of algebras of dihedral type of polynomial growth. These ideals $I$ 
are listed in Figure \ref{fig:relations}.

\begin{figure}[ht] \caption{\label{fig:quivers} Quivers of algebras of dihedral type of 
polynomial growth.}
$$\begin{array}{r@{}ccr@{}c}
1\mathcal{A}=&
\xymatrix @R=-.2pc {
&0&\\
&\ar@(ul,dl)_{\alpha} \bullet \ar@(ur,dr)^{\beta}}
&&
2\mathcal{A}=& 
\xymatrix @R=-.2pc {
&0&1\\
&\ar@(ul,dl)_{\alpha} \bullet \ar@<.8ex>[r]^{\beta} \ar@<.9ex>[r];[]^{\gamma}
& \bullet }
\\[4em]
3\mathcal{A}=&
\xymatrix @R=-.2pc {
&1&\\
0\; \bullet \ar@<.8ex>[r]^(.56){\beta} \ar@<1ex>[r];[]^(.44){\gamma}
& \bullet \ar@<.8ex>[r]^(.44){\delta} \ar@<1ex>[r];[]^(.56){\eta} & \bullet\; 2}
&&
 \qquad3\mathcal{B}= &
\xymatrix @R=-.2pc {
&0&1&\\
&\ar@(ul,dl)_{\alpha} \bullet \ar@<.8ex>[r]^{\beta} \ar@<.9ex>[r];[]^{\gamma}
& \bullet \ar@<.8ex>[r]^(.46){\delta} \ar@<.9ex>[r];[]^(.54){\eta} & \bullet\;2}
\\[3em]
 3\mathcal{D}=&
\xymatrix @R=-.2pc {
&0&1&2\\
&\ar@(ul,dl)_{\alpha} \bullet \ar@<.8ex>[r]^{\beta} \ar@<.9ex>[r];[]^{\gamma}
& \bullet \ar@<.8ex>[r]^(.46){\delta} \ar@<.9ex>[r];[]^(.54){\eta} & \bullet \ar@(ur,dr)^{\xi}}
&&
3\mathcal{K}=&
\vcenter{\xymatrix @R1.8pc {
 0\;\bullet \ar@<.7ex>[rr]^{\beta} \ar@<.8ex>[rr];[]^{\gamma}\ar@<.7ex>[rdd]^{\kappa} \ar@<.8ex>[rdd];[]^{\lambda}
&&\bullet\ar@<.7ex>[ldd]^{\delta} \ar@<.8ex>[ldd];[]^{\eta}\;1\\&&\\ 
&\unter{\mbox{\normalsize $\bullet$}}{\mbox{\normalsize $2$}}& }}
\\[6em]
3\mathcal{L}=&
\vcenter{\xymatrix @R=.01pc {
\ar@(ul,dl)_{\alpha} \unter{\mbox{\normalsize $0$}}{\mbox{\normalsize $\bullet$}}
\ar[rr]^{\beta}  &&\unter{\mbox{\normalsize $1$}}{\mbox{\normalsize $\bullet$}}
\ar[ldddddddd]^{\delta} 
\\&&\\&&\\&&\\&&\\&&\\&&\\&&\\ 
&
\unter{\mbox{\normalsize $\bullet$}}{\mbox{\normalsize $2$}}
\ar[uuuuuuuul]^{\lambda}
&}}
&&
3\mathcal{Q}=&
\vcenter{\xymatrix @R=.01pc {
\ar@(ul,dl)_{\alpha} \unter{\mbox{\normalsize $0$}}{\mbox{\normalsize $\bullet$}} 
\ar[rr]^{\beta}  &&\unter{\mbox{\normalsize $1$}}{\mbox{\normalsize $\bullet$}}
\ar[ldddddddd]^{\delta} \ar@(ur,dr)^{\rho} 
\\&&\\&&\\&&\\&&\\&&\\&&\\&&\\ 
&
\unter{\mbox{\normalsize $\bullet$}}{\mbox{\normalsize $2$}}
\ar[uuuuuuuul]^{\lambda}
& }}
\end{array}
$$
\end{figure}

\begin{figure}[ht]
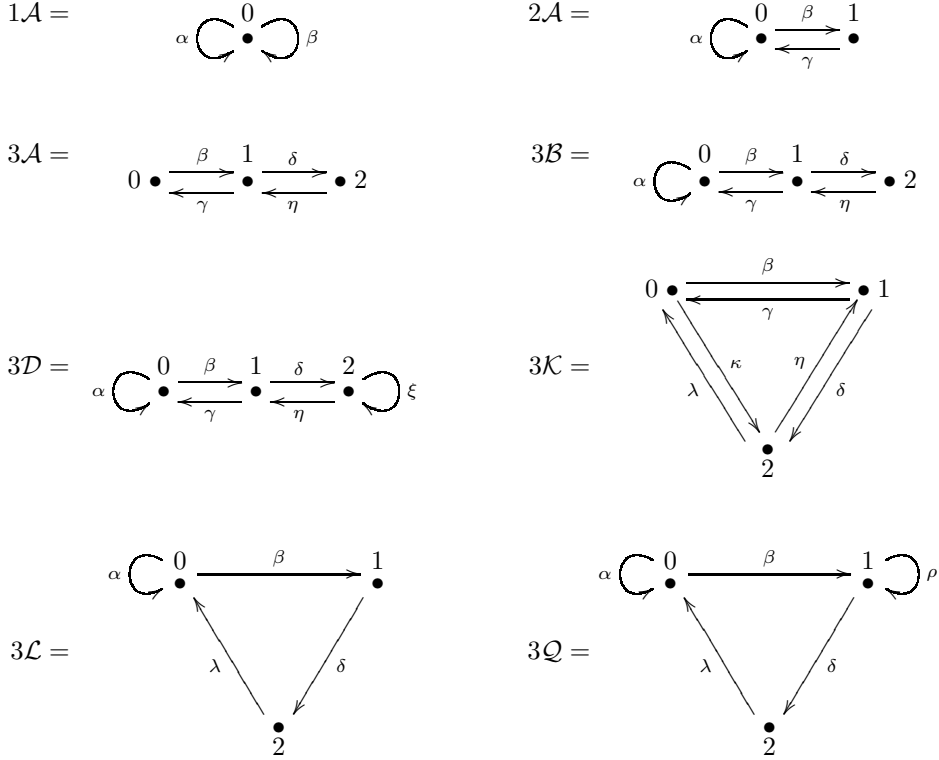
 \caption{\label{fig:relations} Algebras of dihedral type of 
polynomial growth.}
\begin{eqnarray*}
D(1)_0&=&k[1\mathcal{A}]/\langle \alpha^2,\beta^2,\alpha\beta-\beta\alpha\rangle,\\
D(1)_1&=&k[1\mathcal{A}]/\langle \alpha^2,\beta^2-\alpha\beta,\alpha\beta-\beta\alpha\rangle
\qquad \mbox{ and }\mathrm{char}(k)=2,\\
D(2\mathcal{A})_0&=&k[2\mathcal{A}]/\langle \alpha^2,\beta\gamma,
\alpha\gamma\beta-\gamma\beta\alpha\rangle,\\
D(2\mathcal{A})_1&=&k[2\mathcal{A}]/\langle \alpha^2-\alpha\gamma\beta,\beta\gamma,
\alpha\gamma\beta-\gamma\beta\alpha\rangle
\qquad \mbox{ and }\mathrm{char}(k)=2,\\
D(3\mathcal{A})_1&=&k[3\mathcal{A}]/\langle \gamma\beta,\delta\eta,
\beta\gamma\eta\delta-\eta\delta\beta\gamma\rangle,\\
D(3\mathcal{A})_2&=&k[3\mathcal{A}]/\langle \gamma\eta,\delta\beta,
(\beta\gamma)^2-(\eta\delta)^2\rangle,\\
D(3\mathcal{B})_{2,1}&=&k[3\mathcal{B}]/
\langle \alpha\gamma,\beta\alpha,\gamma\eta,\delta\beta,
\alpha^2-\gamma\beta,\beta\gamma-(\eta\delta)^2\rangle,\\
D(3\mathcal{B})_{2,2}&=&k[3\mathcal{B}]/
\langle \alpha\gamma,\beta\alpha,\gamma\eta,\delta\beta,
\alpha^2-(\gamma\beta)^2,(\beta\gamma)^2-\eta\delta\rangle,\\
D(3\mathcal{D})_2&=&k[3\mathcal{D}]/\langle \alpha\gamma,\beta\alpha,\gamma\eta,\delta\beta,
\eta\xi,\xi\delta,\alpha^2-\gamma\beta,\beta\gamma-\eta\delta,\delta\eta-\xi^2\rangle,\\
D(3\mathcal{K})&=&k[3\mathcal{K}]/\langle \beta\lambda,\gamma\eta,\delta\beta,\eta\kappa,
\kappa\gamma,\lambda\delta,\beta\gamma-\eta\delta,
\gamma\beta-\lambda\kappa,\delta\eta-\kappa\lambda\rangle,\\
D(3\mathcal{L})&=&k[3\mathcal{L}]/\langle\alpha\lambda, \beta\alpha,
\alpha^2-(\lambda\delta\beta)^2,\delta(\beta\lambda\delta)^2\rangle,\\
D(3\mathcal{Q})&=&k[3\mathcal{Q}]/\langle \alpha\lambda,\beta\alpha,\delta\rho,\rho\beta,
\alpha^2-\lambda\delta\beta,\beta\lambda\delta-\rho^2\rangle.
\end{eqnarray*}
\end{figure}

For each algebra $\Lambda$ in Figure \ref{fig:relations}, 
$\Lambda/\mathrm{soc}(\Lambda)$ is a string algebra. In particular, all 
indecomposable non-projective $\Lambda$-modules are given combinatorially by 
string and band modules (see \cite{buri}). Moreover, the $\Lambda$-module homomorphisms 
between string and band modules have been explicitly described in \cite{krau}.
For the convenience of the reader, we give a brief introduction to string and band modules in
\S \ref{s:stringband}.

For each $\Lambda$, there exists a unique pair $(p,q)\in\{(1,1),(3,1),(3,3)\}$ and a unique
band $B$ such that the stable Auslander-Reiten quiver of $\Lambda$ 
consists of a non-periodic component of the form $\mathbb{Z}\tilde{A}_{p,q}$, one $p$-tube 
and one $q$-tube consisting of string modules, and for each $\mu\in k^*$ a 1-tube consisting
of band modules with $M(B,\mu,1)$ lying at the boundary of this 1-tube.

In the following subsections, we find for each $\Lambda$ in Figure \ref{fig:relations} 
all indecomposable modules whose 
stable endomorphism rings are isomorphic to $k$ and we determine for each such module its 
universal deformation ring. We organize the modules according to the components of the
stable Auslander-Reiten quiver of $\Lambda$ to which they belong.

\subsection{Non-periodic components}

In this subsection, we show that the stable endomorphism ring of every module which belongs 
to the non-periodic component of the stable Auslander-Reiten quiver is isomorphic to $k$
and determine the universal deformation ring of each of these modules.

\begin{prop}
\label{prop:nonperiodic}
Let $\Lambda=kQ/I$ be a basic algebra of dihedral type of polynomial growth,
and let $\mathfrak{C}$ be the non-periodic component of the stable Auslander-Reiten quiver of 
$\Lambda$. Then the stable endomorphism ring of every module
belonging to $\mathfrak{C}$ is isomorphic to $k$.
\begin{itemize}
\item[(i)] If $\Lambda=D(1)_c$ for $c\in\{0,1\}$, then  there is precisely one $\Omega$-orbit of 
modules in $\mathfrak{C}$ and $R(\Lambda,V)\cong \Lambda$ for every module $V$ belonging
to $\mathfrak{C}$.
\item[(ii)] If $\Lambda=D(2\mathcal{A})_c$ for $c\in\{0,1\}$, then  there are precisely two 
$\Omega$-orbits of modules in $\mathfrak{C}$ and $R(\Lambda,V)\cong k[[t]]/(t^2)$ for every 
module $V$ belonging to $\mathfrak{C}$.
\item[(iii)] If $\Lambda\in\{D(3\mathcal{A})_1,D(3\mathcal{K})\}$, then there are precisely three
$\Omega$-orbits of modules in $\mathfrak{C}$ and $R(\Lambda,V)\cong k$ for every 
module $V$ belonging to $\mathfrak{C}$.
\item[(iv)] If $\Lambda\in \{D(3\mathcal{A})_2,D(3\mathcal{B})_{2,1},D(3\mathcal{B})_{2,2},
D(3\mathcal{D})_2,D(3\mathcal{L}),D(3\mathcal{Q})\}$, then there are precisely three
$\Omega$-orbits of modules in $\mathfrak{C}$ and $R(\Lambda,V)\cong k$ for
$V$ in precisely one of these $\Omega$-orbits and $R(\Lambda,V)\cong k[[t]]/(t^2)$
for $V$ in the remaining two $\Omega$-orbits. 
\end{itemize}
\end{prop}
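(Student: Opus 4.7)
The plan is to work through the twelve Morita equivalence classes in Figure \ref{fig:relations} by a uniform strategy. Since each such $\Lambda$ is symmetric, hence Frobenius, Theorem \ref{thm:udrvelez}(iii) implies that $R(\Lambda,V)$ is constant on $\Omega$-orbits and that $\underline{\mathrm{End}}_\Lambda(\Omega(V))\cong k$ whenever $\underline{\mathrm{End}}_\Lambda(V)\cong k$. I would first check that $\mathfrak{C}$ is stable under $\Omega$ (routine since $\Lambda$ is symmetric and $\Omega^2$ agrees with the Auslander--Reiten translate). This reduces the whole statement to picking one representative from each $\Omega$-orbit in $\mathfrak{C}$ (one, two, or three representatives according as $(p,q)=(1,1),(3,1),(3,3)$), verifying its stable endomorphism ring is $k$, and computing its universal deformation ring.

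To find the representatives I would use that $\Lambda/\mathrm{soc}(\Lambda)$ is a string algebra, so every indecomposable module in $\mathfrak{C}$ is a string module, and I would write down explicit strings using the combinatorics of a $\mathbb{Z}\tilde A_{p,q}$-component. For each such representative $V$, Krause's description \cite{krau} of homomorphisms between string modules yields a combinatorial basis of $\mathrm{Hom}_\Lambda(V,V)$; subtracting the maps factoring through a projective should leave a one-dimensional space, confirming $\underline{\mathrm{End}}_\Lambda(V)\cong k$ and thus the existence of $R(\Lambda,V)$ by Theorem \ref{thm:udrvelez}(i).

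For the identification of $R(\Lambda,V)$ I would bound it above by computing its tangent space $\mathrm{Ext}^1_\Lambda(V,V)$. Applying $\mathrm{Hom}_\Lambda(-,V)$ to a short exact sequence $0\to\Omega(V)\to P\to V\to 0$ with $P$ a projective cover, and plugging in Krause's Hom-bases for $\mathrm{Hom}_\Lambda(\Omega(V),V)$ and $\mathrm{Hom}_\Lambda(P,V)$, reduces this to bookkeeping on strings. The expected output is $\mathrm{Ext}^1_\Lambda(V,V)=0$ for the orbits where we claim $R(\Lambda,V)\cong k$, and dimension one in the remaining cases, forcing $R(\Lambda,V)$ to be a quotient of $k[[t]]$; for case (i) the dimension should be two, so $R(\Lambda,V)$ is a quotient of $k[[t_1,t_2]]$.

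The main obstacle is pinning down each quotient precisely. For the $k[[t]]/(t^2)$ conclusions I would exhibit a concrete non-trivial lift of $V$ over $k[[t]]/(t^2)$ compatible with the defining relations of $\Lambda$ (giving a surjection $R(\Lambda,V)\twoheadrightarrow k[[t]]/(t^2)$) and then rule out any lift to $k[[t]]/(t^3)$ by checking that the corresponding lifted relations force $t^2=0$. The subtlest claim is $R(\Lambda,V)\cong\Lambda$ in (i): here I would construct a lift of $V$ over $\Lambda$ itself, viewing $V$ essentially through the left regular representation, producing a surjection $R(\Lambda,V)\twoheadrightarrow\Lambda$ which becomes an isomorphism by a Nakayama-type dimension count anchored on $\dim_k\mathrm{Ext}^1_\Lambda(V,V)=2$. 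Case (iv) requires the most care, since the three $\Omega$-orbits in the same component yield two different rings; the matching between each orbit and the predicted ring has to be done orbit by orbit, and singling out the orbit whose ring is $k$ amounts to identifying the unique representative for which the Ext-bookkeeping above collapses to zero.
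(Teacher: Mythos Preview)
Your overall plan matches the paper's: reduce to one representative per $\Omega$-orbit, verify $\underline{\mathrm{End}}\cong k$ via Krause's description, read off $\mathrm{Ext}^1$ to bound $R(\Lambda,V)$ from above, and then produce explicit lifts to bound it from below. The paper proceeds in exactly this way and lists essentially the same string-module representatives you would find.

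There is, however, a genuine gap in your argument for part (i). You propose to exhibit a lift of $V=S_0$ over $\Lambda$, obtain a surjection $R(\Lambda,V)\twoheadrightarrow\Lambda$, and then upgrade it to an isomorphism by ``a Nakayama-type dimension count anchored on $\dim_k\mathrm{Ext}^1=2$''. But the surjection only tells you that $R(\Lambda,V)$ is \emph{at least} as large as $\Lambda$, and knowing the tangent space is two-dimensional only says $R(\Lambda,V)$ is a quotient of $k[[t_1,t_2]]$; neither piece of information bounds $\dim_k R(\Lambda,V)$ from above, and Nakayama gives you nothing in that direction. The paper's argument is different and more direct: it observes that $\Lambda=D(1)_c$ is \emph{commutative} local Artinian, so $\Lambda$ itself lies in $\hat{\mathcal{C}}$. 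Since $S_0$ is one-dimensional, any lift of $S_0$ over an arbitrary $R\in\hat{\mathcal{C}}$ is free of rank one over $R$, and the $\Lambda$-action on it is nothing but a morphism $\tau_M:\Lambda\to R$ in $\hat{\mathcal{C}}$; moreover $\mathrm{Aut}_{R\otimes_k\Lambda}(M)\cong R^\times$, so $\tau_M$ is uniquely determined by the isomorphism class of the lift. This shows directly that $\Lambda$ represents the deformation functor, i.e.\ $R(\Lambda,S_0)\cong\Lambda$, with no dimension count needed.

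For the $k[[t]]/(t^2)$ cases your proposed obstruction method (lift the defining relations and see that they force $t^2=0$) differs from the paper's. The paper instead argues module-theoretically: if $M_0$ were a lift of $V$ over $k[[t]]/(t^3)$ with $M=M_0/t^2M_0$, then the vanishing $\mathrm{Ext}^1_\Lambda(M,V)=0$ forces $M_0\cong V\oplus M$ as $\Lambda$-modules; writing the $t$-action as $t\cdot(v,x)=(\sigma(x),tx)$ for some surjection $\sigma:M\to V$, and checking that every such $\sigma$ has kernel exactly $tM$, one gets $t^2M_0=0$, contradicting $t^2M_0\cong V$. Your relation-lifting approach would also work in principle, but it requires a separate matrix computation for each algebra, whereas the paper's Ext-vanishing argument is uniform and avoids that.
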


\begin{proof}
Suppose first that $Q$ has a unique vertex, i.e. $\Lambda=D(1)_c$ for $c\in\{0,1\}$, where 
$c=1$ only occurs when $\mathrm{char}(k)=2$.
Then $\mathfrak{C}$ is of the form $\mathbb{Z}\tilde{A}_{1,1}$ and there is precisely
one $\Omega$-orbit of $\Lambda$-modules in $\mathfrak{C}$ represented by the simple 
module $S_0$ corresponding to the unique vertex in $Q$. To prove part (i), we can by Theorem 
\ref{thm:udrvelez} restrict to the case when $V=S_0$.
Note that $\Lambda$ is a ring in $\hat{\mathcal{C}}$ and that $U=\Lambda$ defines a lift
of $S_0$ over $\Lambda$, where a simple tensor $a\otimes b\in \Lambda\otimes_k\Lambda$
acts on $U$ as multiplication by $a b$. Let $R$ be an arbitrary ring in $\hat{\mathcal{C}}$ 
and let $M$ be a lift of $S_0$ over $R$. Then the $\Lambda$-action on $M=R$ is given by a 
continuous $k$-algebra homomorphism $\tau_M:\Lambda\to R$ which induces the identity
on the residue field $k$, i.e. $\tau_M$ is a morphism in $\hat{\mathcal{C}}$. Moreover, since
$M$ is free of rank 1 over $R$, $\mathrm{Aut}_{R\otimes_k\Lambda}(M)\cong
\mathrm{Aut}_R(M)\cong \mathrm{GL}_1(R)$, which implies that $\tau_M$ is unique. Since 
$M=R\otimes_{\Lambda,\tau_M}U$, it follows that $U$ defines a universal lift of $S_0$ over 
$\Lambda$. In other words, $R(\Lambda,S_0)\cong \Lambda$, which proves part (i).

Next suppose that $Q$ has two vertices, i.e. $\Lambda=D(2\mathcal{A})_c$ for $c\in\{0,1\}$, 
where $c=1$ only occurs when $\mathrm{char}(k)=2$. 
Then $\mathfrak{C}$ is of the form $\mathbb{Z}\tilde{A}_{3,1}$ and there 
are precisely two $\Omega$-orbits of $\Lambda$-modules in $\mathfrak{C}$ represented by the 
simple module $S_0$ and the string module $M(\beta)$. To prove part (ii), we can by Theorem 
\ref{thm:udrvelez} restrict to the case when $V\in\{S_0,M(\beta)\}$. We have
$\mathrm{End}_\Lambda(V)\cong k\cong \mathrm{Ext}^1_\Lambda(V,V)$, which 
implies that $R(\Lambda,V)$ is isomorphic to a quotient algebra of $k[[t]]$. There exists a non-split
short exact sequence of $\Lambda$-modules
$$0 \to V \xrightarrow{\iota} M \xrightarrow{\tau} V \to 0$$
where $M=M(\alpha)$ (resp. $M=M(\beta\alpha^{-1}\beta^{-1})$) if $V=S_0$ (resp. $V=M(\beta)$).
This means that $M$ defines a non-trivial lift of $V$ over $k[[t]]/(t^2)$ where the action of $t$ is 
given by $\iota\circ\tau$. Hence there exists a unique surjective $k$-algebra homomorphism 
$\psi:R(\Lambda,V)\to k[[t]]/(t^2)$ in $\hat{\mathcal{C}}$ corresponding to the deformation 
defined by $M$. We need to show that $\psi$ is an isomorphism.
Suppose this is false. Then there exists a surjective $k$-algebra homomorphism
$\psi_0:R(\Lambda,V)\to k[[t]]/(t^3)$ in $\hat{\mathcal{C}}$
such that $\pi\circ\psi_0=\psi$ where $\pi:k[[t]]/(t^3)\to
k[[t]]/(t^2)$ is the natural projection. Let $M_0$ be a $k[[t]]/(t^3)\otimes_k\Lambda$-module
which defines a lift of $V$ over $k[[t]]/(t^3)$ corresponding to $\psi_0$.
Because $M_0/t^2M_0\cong M$ and $t^2M_0\cong V$, we obtain 
a non-split short exact sequence of $k[[t]]/(t^3)\otimes_k\Lambda$-modules
$$0\to V \to M_0\to M\to 0.$$
Since $\mathrm{Ext}^1_{\Lambda}(M,V)=0$, this sequence splits as a sequence
of $\Lambda$-modules. Hence $M_0=V\oplus M$ as $\Lambda$-modules.
Writing elements of $M_0$ as $(v,x)$ where $v\in V$ and $x\in M$, the action of $t$
on $M_0$ is given by $t\,(v,x)=(\sigma(x),t\, x)$, where $\sigma:M\to V$ is a 
surjective $\Lambda$-module homomorphism. Since for each such $\sigma$ 
its kernel is equal to $tM$,
it follows that $t^2(v,x)=(\sigma(tx),t^2x)=(0,0)$ for all
$v\in V$ and $x\in M$. But this is a contradiction to $t^2M_0\cong V$. Thus $\psi$ is a 
$k$-algebra isomorphism and $R(\Lambda,V)\cong k[[t]]/(t^2)$, which proves part (ii).

Next suppose $Q$ has precisely three vertices. Then $\mathfrak{C}$ is of the form 
$\mathbb{Z}\tilde{A}_{3,3}$ and there are precisely three $\Omega$-orbits of 
$\Lambda$-modules in $\mathfrak{C}$ represented by $V_0,V_1,V_2$ as follows:
\begin{itemize}
\item If $\Lambda\in\{D(3\mathcal{A})_1, D(3\mathcal{A})_2\}$, then 
$V_0=S_1, V_1=M(\beta), V_2=M(\eta)$. 
\item If $\Lambda=D(3\mathcal{B})_{2,1}$, then $V_0=S_1, V_1=S_0, V_2=M(\eta)$.
\item If $\Lambda=D(3\mathcal{B})_{2,2}$, then $V_0=M(\gamma\delta^{-1}),V_1=S_0,
V_2=M(\gamma)$.
\item If $\Lambda\in\{D(3\mathcal{D})_2,D(3\mathcal{K}))\}$, then $V_0=S_1, V_1=S_0,
V_2=S_2$.
\item If $\Lambda=D(3\mathcal{L})$, then $V_0=M(\beta),V_1=S_0,V_2=M(\delta\beta)$.
\item If $\Lambda=D(3\mathcal{Q})$, then $V_0=M(\delta),V_1=S_0,V_2=S_1$.
\end{itemize}
In all cases, $\mathrm{End}_\Lambda(V_i)\cong k$ for $i\in\{0,1,2\}$ and
$\mathrm{Ext}^1_\Lambda(V_0,V_0)=0$. Hence $R(\Lambda,V_0)\cong k$.

If $\Lambda\in\{D(3\mathcal{A})_1, D(3\mathcal{K})\}$, then 
$\mathrm{Ext}^1_\Lambda(V_i,V_i)=0$ also when $i\in\{1,2\}$, implying
$R(\Lambda,V_i)\cong k$ for all $i\in\{0,1,2\}$, which proves part (iii).

Finally, suppose $\Lambda\in \{D(3\mathcal{A})_2,D(3\mathcal{B})_{2,1},D(3\mathcal{B})_{2,2},
D(3\mathcal{D})_2,D(3\mathcal{L}),D(3\mathcal{Q})\}$, and let $i\in\{1,2\}$. Then
$\mathrm{Ext}^1_\Lambda(V_i,V_i)\cong k$, which implies that
$R(\Lambda,V_i)$ is isomorphic to a quotient algebra of $k[[t]]$. 
To prove that $R(\Lambda,V_i)\cong k[[t]]/(t^2)$, we first show that $V_i$ has a non-trivial lift 
$M_i$ over $k[[t]]/(t^2)$. We define $M_i$ as follows:
\begin{itemize}
\item If $\Lambda= D(3\mathcal{A})_2$, let $M_1=M(\beta\gamma\beta)$ and
$M_2=M(\eta\delta\eta)$.
\item If $\Lambda=D(3\mathcal{B})_{2,1}$, let $M_1=M(\alpha)$ and $M_2=M(\eta\delta\eta)$.
\item If $\Lambda=D(3\mathcal{B})_{2,2}$, let $M_1=M(\alpha)$ and
$M_2=M(\gamma\beta\gamma)$.
\item If $\Lambda=D(3\mathcal{D})_2$, let $M_1=M(\alpha)$ and $M_2=M(\xi)$.
\item If $\Lambda=D(3\mathcal{L})$, let $M_1=M(\alpha)$ and 
$M_2=M(\delta\beta\lambda\delta\beta)$.
\item If $\Lambda=D(3\mathcal{Q})$, let $M_1=M(\alpha)$ and $M_2=M(\rho)$.
\end{itemize}
Let $i\in\{1,2\}$. In all cases, there exists a non-split short exact sequence of $\Lambda$-modules
$$0 \to V_i \xrightarrow{\iota_i} M_i \xrightarrow{\tau_i} V_i \to 0$$
which means that $M_i$ defines a non-trivial lift of $V_i$ over $k[[t]]/(t^2)$ where the action of $t$ 
is given by $\iota_i\circ\tau_i$. Since $\mathrm{Ext}^1_\Lambda(M_i,V_i)=0$ and the only 
surjective $\Lambda$-module homomorphisms $\sigma_i:M_i\to V_i$ have kernel equal to 
$tM_i$, we can argue similarly to the case $D(2\mathcal{A})_c$ to show that 
$R(\Lambda,V_i)\cong k[[t]]/(t^2)$ for $i\in\{1,2\}$, proving part (iv).
\end{proof}

\subsection{$3$-tubes}

In this subsection, we find all modules belonging to 3-tubes of the stable Auslander-Reiten quiver
whose stable endomorphism rings are isomorphic to $k$ and determine their universal 
deformation rings.

\begin{prop}
\label{prop:3tubes}
Let $\Lambda=kQ/I$ be a basic algebra of dihedral type of polynomial growth,
let $\mathfrak{T}$ be a $3$-tube of the stable Auslander-Reiten quiver of $\Lambda$ consisting of 
string modules, and let $T_0$ be a module belonging to the boundary of $\mathfrak{T}$.
\begin{itemize}
\item[(i)] If $\Lambda=D(1)_c$ for $c\in\{0,1\}$, then $\mathfrak{T}$ does not exist.
\item[(ii)] If $\Lambda=D(2\mathcal{A})_c$ for $c\in\{0,1\}$, then $\mathfrak{T}$ is unique and
stable under $\Omega$. There is precisely one $\Omega$-orbit of modules in $\mathfrak{T}$
whose stable endomorphism rings are isomorphic to $k$ represented by $T_0$, and
$R(\Lambda,T_0)\cong k$.
\item[(iii)] If $\Lambda\in\{D(3\mathcal{A})_1,D(3\mathcal{K})\}$, then $\mathfrak{T}$ is one
of two $3$-tubes and $\mathfrak{T}$ is stable under $\Omega$. 
There is precisely one $\Omega$-orbit of modules in $\mathfrak{T}$
whose stable endomorphism rings are isomorphic to $k$ represented by $T_0$, and
$R(\Lambda,T_0)\cong k$.
\item[(iv)] If $\Lambda\in \{D(3\mathcal{A})_2,D(3\mathcal{B})_{2,1},D(3\mathcal{B})_{2,2},
D(3\mathcal{D})_2,D(3\mathcal{L}),D(3\mathcal{Q})\}$, then $\mathfrak{T}$ is one
of two $3$-tubes and $\Omega$ interchanges these two $3$-tubes.
There are precisely three $\Omega$-orbits of modules in $\mathfrak{T}\cup\Omega(\mathfrak{T})$
whose stable endomorphism rings are isomorphic to $k$ represented by $T_0$,
by a successor $T_1$ of $T_0$, and by a successor $T_2$ of $T_1$ which does not lie
in the $\Omega$-orbit of $T_0$. Moreover, $R(\Lambda,T_0)\cong k\cong R(\Lambda,T_1)$
and $R(\Lambda,T_2)\cong k[[t]]$. 
\end{itemize}
\end{prop}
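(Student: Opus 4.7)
The plan is as follows. Part (i) is immediate from the description of the stable Auslander-Reiten quiver recalled in \S\ref{s:intro}: for $Q=1\mathcal{A}$, this quiver consists solely of a component of type $\mathbb{Z}\tilde{A}_{1,1}$ together with $1$-tubes, so no $3$-tube exists.

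For parts (ii)--(iv), I would proceed algebra by algebra, using that $\Lambda/\mathrm{soc}(\Lambda)$ is a string algebra and that all modules in the $3$-tubes are string modules $M(w)$. For each $\Lambda$ in Figure \ref{fig:relations}, I would first write down the string $w_0$ whose associated string module represents the boundary module $T_0$ of each $3$-tube, and compute $\Omega(M(w_0))$ combinatorially from the projective cover. This determines whether $\Omega$ fixes each $3$-tube (classes (ii) and (iii)) or interchanges the two $3$-tubes (class (iv)), and identifies $T_0$ within its $\Omega$-orbit.

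I would then determine $\underline{\mathrm{End}}_\Lambda(V)$ for every string module $V$ in $\mathfrak{T}\cup\Omega(\mathfrak{T})$ by means of Krause's description (recalled in \S\ref{s:stringband}) of $\mathrm{Hom}_\Lambda$ between string modules, modulo the ideal of morphisms factoring through a projective. Since modules further up a tube are built by iterated string extensions at the mouth, the number of admissible substrings giving nonzero stable endomorphisms grows with the depth, so only finitely many levels from the boundary can yield $\underline{\mathrm{End}}_\Lambda(V)\cong k$. The computation should show that in classes (ii) and (iii) this forces $V$ to lie in the $\Omega$-orbit of $T_0$, whereas in class (iv) exactly the three $\Omega$-orbits of $T_0$, $T_1$, $T_2$ arise. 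Given that $\underline{\mathrm{End}}_\Lambda(V)\cong k$, Theorem \ref{thm:udrvelez} provides the universal deformation ring $R(\Lambda,V)$; its isomorphism type is then controlled by $\dim_k\mathrm{Ext}^1_\Lambda(V,V)$, which equals either $0$ (yielding $R(\Lambda,V)\cong k$) or $1$ (yielding a quotient of $k[[t]]$), and can be read off from a minimal projective presentation of $V$ or from the Auslander-Reiten formula. This takes care of the conclusions for $T_0$ in (ii)--(iv) and for $T_1$ in (iv).

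The main obstacle is to show that $R(\Lambda,T_2)\cong k[[t]]$ in part (iv), rather than a proper truncation. Having established $\dim_k\mathrm{Ext}^1_\Lambda(T_2,T_2)=1$, it suffices to exhibit a lift of $T_2$ over $k[[t]]$ which is not induced from any $k[[t]]/(t^n)$: by universality this yields a surjection $R(\Lambda,T_2)\twoheadrightarrow k[[t]]$, and combined with the fact that $R(\Lambda,T_2)$ is already a quotient of $k[[t]]$ it forces equality. I would construct such a lift by writing $T_2=M(w)$ for the appropriate string $w$ and defining a $k[[t]]\otimes_k\Lambda$-module on a free $k[[t]]$-module indexed by the vertices of $w$, with the action of the arrows of $w$ perturbed by units of $k[[t]]$ so as to reduce mod $t$ to $T_2$, in the same spirit as the construction of the $1$-parameter family of band modules $M(B,\mu,1)$ from a band $B$. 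Verifying that this assignment respects the relations in $I$ is the case-by-case computation that would need to be carried out; the fact that $T_2$ sits exactly two levels above the boundary is what leaves enough room for such an unobstructed deformation to exist.
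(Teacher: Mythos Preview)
Your outline matches the paper's approach closely: identify the boundary modules and their $\Omega$-orbits, use Krause's Hom description to show that away from the first few levels every string module in the tube admits a nonzero stable endomorphism with small image (the paper exhibits one with image a simple module $S_0$), compute $\mathrm{Ext}^1_\Lambda(T_i,T_i)$ to get $R(\Lambda,T_i)\cong k$ for $i\in\{0,1\}$ and to bound $R(\Lambda,T_2)$ as a quotient of $k[[t]]$, and then construct an explicit lift of $T_2$ over $k[[t]]$ case by case.

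One point to correct in your description of the lift $L_2$: perturbing the arrows of the string $w$ by \emph{units} of $k[[t]]$ gives nothing, since such units can be absorbed into the $k[[t]]$-basis and you recover the trivial lift. What the paper actually does (and what your band-module analogy is really pointing at) is to keep the arrow actions of $w$ unchanged and introduce one \emph{new} arrow action with coefficient $t$---an arrow of $Q$ not appearing in $w$ that ``closes up'' the string into a cycle. Reducing mod $t$ kills this extra arrow and recovers $M(w)$, while mod $t^2$ one obtains a non-trivial self-extension of $T_2$, forcing the classifying map $R(\Lambda,T_2)\to k[[t]]$ to be surjective. With that adjustment, your plan and the paper's proof coincide.
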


\begin{proof}
If $\Lambda=D(1)_c$ then there are precisely two maximal directed strings, which means that
the stable Auslander-Reiten quiver of $\Lambda$ does not contain any 3-tubes.

Suppose now that $\Lambda=D(2\mathcal{A})_c$ for $c\in\{0,1\}$, 
where $c=1$ only occurs when $\mathrm{char}(k)=2$. Then $\mathfrak{T}$ is unique
and hence stable under $\Omega$. Using hooks and cohooks (see \cite[pp. 166--174]{buri}), 
we see that the $\Omega$-orbits of all $\Lambda$-modules in $\mathfrak{T}$ are represented 
by $S_1$, which lies at the boundary of $\mathfrak{T}$, and by
\begin{eqnarray*}
T_{1,j} &=&
M\left((\alpha^{-1}\gamma\beta)^{j-1}\alpha^{-1}\right),\\
T_{2,j}&=&M\left((\alpha^{-1}\gamma\beta)^{j-1}\alpha^{-1}\gamma\right),\\
T_{3,j}&=&M\left((\alpha^{-1}\gamma\beta)^j\alpha^{-1}\beta^{-1}\right)
\end{eqnarray*}
for all $j\ge 1$. For $i\in\{1,2,3\}$ and $j\ge 1$, there exists an endomorphism of $T_{i,j}$
whose image is isomorphic to $S_0$ and which does not factor through a projective 
$\Lambda$-module. Hence, the only modules in 
$\mathfrak{T}$ whose stable endomorphism rings are isomorphic to $k$ 
lie in the $\Omega$-orbit of $S_1$. Since $\mathrm{Ext}^1_\Lambda(S_1,S_1)=0$, we obtain that 
$R(\Lambda,T_0)\cong k$ for all $T_0$ at the boundary of $\mathfrak{T}$, which proves part (ii).

Next suppose $\Lambda$ has precisely three isomorphism classes of simple modules. 
Then $\mathfrak{T}$ is one of two 3-tubes.

If $\Lambda=D(3\mathcal{A})_1$ (resp. $\Lambda=D(3\mathcal{K})$), then $\mathfrak{T}$ is 
stable under $\Omega$ and the $\Omega$-orbit of the modules at the boundary of 
$\mathfrak{T}$ is represented by $T_0=S_1$ or $T_0=S_2$ (resp. by $T_0=M(\gamma)$ or 
$T_0=M(\lambda)$). Arguing similarly to the case $D(2\mathcal{A})_c$, we see that
the only modules in $\mathfrak{T}$ whose stable endomorphism rings are isomorphic to $k$ 
lie in the $\Omega$-orbit of $T_0$. Since $\mathrm{Ext}^1_\Lambda(T_0,T_0)=0$, we obtain that 
$R(\Lambda,T_0)\cong k$, which proves part (iii).

If $\Lambda\in \{D(3\mathcal{A})_2,D(3\mathcal{B})_{2,1},D(3\mathcal{B})_{2,2},
D(3\mathcal{D})_2,D(3\mathcal{L}),D(3\mathcal{Q})\}$, then $\Omega(\mathfrak{T})\neq
\mathfrak{T}$. The representatives $T_0,T_1,T_2$ from the statement of part (iv) can be
taken to be as follows:
\begin{itemize}
\item If $\Lambda=D(3\mathcal{A})_2$, then $T_0=S_0,
T_1=M(\gamma\delta^{-1}\eta^{-1}\delta^{-1}), T_2=M(\gamma\delta^{-1}\eta^{-1})$.
\item If $\Lambda=D(3\mathcal{B})_{2,1}$, then $T_0=S_2, T_1=M(\delta\gamma^{-1}),
T_2=M(\delta\gamma^{-1}\alpha\beta^{-1})$.
\item If $\Lambda=D(3\mathcal{B})_{2,2}$, then $T_0=M(\delta^{-1}), T_1=S_1,
T_2=M(\beta\alpha^{-1})$.
\item If $\Lambda=D(3\mathcal{D})_2$, then $T_0=M(\gamma^{-1}), 
T_1=M(\gamma^{-1}\alpha\beta^{-1}), T_2=M(\gamma^{-1}\alpha\beta^{-1}\eta\xi^{-1})$.
\item If $\Lambda=D(3\mathcal{L})$, then $T_0=S_2,T_1=M(\delta),
T_2=M(\delta\beta\alpha^{-1})$.
\item If $\Lambda=D(3\mathcal{Q})$, then $T_0=S_2, T_1=M(\delta\rho^{-1}), 
T_2=M(\delta\rho^{-1}\beta\alpha^{-1})$.
\end{itemize}
Arguing similarly to the case $D(2\mathcal{A})_c$, we see that
the only modules in $\mathfrak{T}$ whose stable endomorphism rings are isomorphic to $k$ 
lie in the $\Omega$-orbits of $T_0,T_1,T_2$. If $i\in\{0,1\}$ then
$\mathrm{Ext}^1_\Lambda(T_i,T_i)=0$ and hence $R(\Lambda,T_i)\cong k$.
On the other hand, $\mathrm{Ext}^1_\Lambda(T_2,T_2)=k$, which implies that 
$R(\Lambda,T_2)$ is isomorphic to a quotient algebra of $k[[t]]$. 
To prove that
$R(\Lambda,T_2)\cong k[[t]]$, it is therefore enough to prove that $T_2$ has a lift $L_2$ over
$k[[t]]$ such that $L_2/t^2L_2$ defines a non-trivial lift of $T_2$ over $k[[t]]/(t^2)$.
We define $L_2$ as follows:
\begin{itemize}
\item If $\Lambda=D(3\mathcal{A})_2$, let $L_2$ be the free $k[[t]]$-module of rank 4
with an ordered basis $\{B_0,B_1,B_2,B_3\}$ and define a $\Lambda$-module structure on $L_2$ 
by letting (the image of) each vertex (resp. arrow) $c$ in $Q$ act on $\{B_0,B_1,B_2,B_3\}$
as the following $4\times 4$ matrix $X_c$:
$X_{e_0}=E_{00}$, $X_{e_1}=E_{11}+E_{33}$, $X_{e_2}=E_{22}$,
$X_\beta=tE_{30}$, $X_\gamma=E_{01}$, $X_\delta=E_{21}$ and $X_\eta=E_{32}$. 
Here $E_{ji}$ denotes the $4\times 4$ matrix which sends $B_i$ to $B_j$ and all other basis 
elements to 0. 

\item If $\Lambda=D(3\mathcal{B})_{2,1}$, let $L_2$ be the free $k[[t]]$-module of rank 5
with an ordered basis $\{B_0,\ldots,B_4\}$ and define a $\Lambda$-module structure on 
$L_2$ by letting (the image of) each vertex (resp. arrow) $c$ in $Q$ act on $\{B_0,\ldots,B_4\}$
as the following $5\times 5$ matrix $X_c$:
$X_{e_0}=E_{22}+E_{33}$, $X_{e_1}=E_{11}+E_{44}$, $X_{e_2}=E_{00}$,
$X_\alpha=E_{23}$, $X_\beta=E_{43}$, $X_\gamma=E_{21}$, $X_\delta=E_{01}$ and 
$X_\eta=tE_{40}$. 

\item If $\Lambda=D(3\mathcal{B})_{2,2}$, let $L_2$ be the free $k[[t]]$-module of rank 3
with an ordered basis $\{B_0,B_1,B_2\}$ and let (the image of) each vertex (resp. arrow) $c$ in 
$Q$ act on $\{B_0,B_1,B_2\}$ as the following $3\times 3$ matrix $X_c$:
$X_{e_0}=E_{11}+E_{22}$, $X_{e_1}=E_{00}$, $X_{e_2}=0$,
$X_\alpha=E_{21}$, $X_\beta=E_{01}$, $X_\gamma=tE_{20}$, $X_\delta=0$ and 
$X_\eta=0$. 

\item If $\Lambda=D(3\mathcal{D})_2$, let $L_2$ be the free $k[[t]]$-module of rank 6
with an ordered basis $\{B_0,\ldots,B_5\}$ and let (the image of) each vertex (resp. arrow) $c$ in 
$Q$ act on $\{B_0,\ldots,B_5\}$ as the following $6\times 6$ matrix $X_c$:
$X_{e_0}=E_{11}+E_{22}$, $X_{e_1}=E_{00}+E_{33}$, $X_{e_2}=E_{44}+E_{55}$,
$X_\alpha=E_{12}$, $X_\beta=E_{32}$, $X_\gamma=E_{10}$, $X_\delta=tE_{50}$, 
$X_\eta=E_{34}$ and $X_{\xi}=E_{54}$. 

\item If $\Lambda=D(3\mathcal{L})$, let $L_2$ be the free $k[[t]]$-module of rank 4
with an ordered basis $\{B_0,\ldots,B_3\}$ and let (the image of) each vertex (resp. arrow) $c$ in 
$Q$ act on $\{B_0,\ldots,B_3\}$ as the following $4\times 4$ matrix $X_c$:
$X_{e_0}=E_{22}+E_{33}$, $X_{e_1}=E_{11}$, $X_{e_2}=E_{00}$,
$X_\alpha=E_{32}$, $X_\beta=E_{12}$,  $X_\delta=E_{01}$ and 
$X_\lambda=tE_{30}$. 

\item If $\Lambda=D(3\mathcal{Q})$, let $L_2$ be the free $k[[t]]$-module of rank 5
with an ordered basis $\{B_0,\ldots,B_4\}$ and let (the image of) each vertex (resp. arrow) $c$ in 
$Q$ act on $\{B_0,\ldots,B_4\}$ as the following $5\times 5$ matrix $X_c$:
$X_{e_0}=E_{33}+E_{44}$, $X_{e_1}=E_{11}+E_{22}$, $X_{e_2}=E_{00}$,
$X_\alpha=E_{43}$, $X_\beta=E_{23}$,  $X_\delta=E_{01}$, 
$X_\lambda=tE_{40}$  and $X_\rho=E_{21}$. 
\end{itemize}
In all cases, $L_2$ is a $k[[t]]\otimes_k\Lambda$-module which is free as a $k[[t]]$-module,
$L_2/tL_2\cong T_2$ as $\Lambda$-modules, and $L_2/t^2L_2$ defines a non-trivial
lift of $T_2$ over $k[[t]]/(t^2)$. This implies that $R(\Lambda,T_2)\cong k[[t]]$, proving part (iv).
\end{proof}

\subsection{$1$-tubes}

In this subsection, we find all modules belonging to 1-tubes of the stable Auslander-Reiten quiver
whose stable endomorphism rings are isomorphic to $k$ and determine their universal 
deformation rings. 

For each $\Lambda$ in Figure \ref{fig:relations}, there exists a unique band $B$.
For each $\mu\in k^*$, the band module $M(B,\mu,1)$ lies at the boundary of its 1-tube.
If $\Lambda=D(1)_c$ then $B=\beta\alpha^{-1}$, and we allow $\mu$ to lie
in $k\cup\{\infty\}$ by defining 
$M(\beta\alpha^{-1},0,m)=M\left(\alpha^{-1}(\beta\alpha^{-1})^{m-1}\right)$ and 
$M(\beta\alpha^{-1},\infty,m)=M\left((\beta\alpha^{-1})^{m-1}\beta\right)$
for all $m\in\mathbb{Z}^+$.
If $\Lambda=D(2\mathcal{A})_c$ then $B=\alpha\beta^{-1}\gamma^{-1}$, and we allow 
$\mu$ to lie in $k$ by defining $M(\alpha\beta^{-1}\gamma^{-1},0,m)=
M\left(\beta^{-1}\gamma^{-1}(\alpha\beta^{-1}\gamma^{-1})^{m-1}\right)$
for all $m\in\mathbb{Z}^+$.
By extending the values of $\mu$ this way, it follows that we can view the string modules 
belonging to 1-tubes as band modules. This allows us to treat all modules in 1-tubes in 
a uniform way.

\begin{prop}
\label{prop:1tubes}
Let $\Lambda=kQ/I$ be a basic algebra of dihedral type of polynomial growth, 
and let $B$ be the unique band for $\Lambda$. Let $m\in\mathbb{Z}^+$, and let
$\mu\in k\cup\{\infty\}$ $($resp. $\mu\in k$, resp.
$\mu\in k^*$$)$ according to $Q$ having $1$ $($resp. $2$, resp. $3$$)$ vertices.
Then the stable endomorphism ring of $M(B,\mu,m)$ is isomorphic to $k$ if and only if
$m=1$ and $\Omega(M(B,\mu,1))\not\cong M(B,\mu,1)$. More precisely:
\begin{itemize}
\item[(i)] If $\Lambda\in\{D(1)_0,D(2\mathcal{A})_0,D(3\mathcal{A})_1,D(3\mathcal{K})\}$ then 
$\underline{\mathrm{End}}_\Lambda(M(B,\mu,1))\cong k$ if and only if $\mathrm{char}(k)\neq 2$
and $\mu\in k^*$.
\item[(ii)]  If $\Lambda\in\{D(1)_1,D(2\mathcal{A})_1\}$ then 
$\underline{\mathrm{End}}_\Lambda(M(B,\mu,1))\cong k$ if and only if  $\mu\in k$.
\item[(iii)] If $\Lambda\in \{D(3\mathcal{A})_2,D(3\mathcal{B})_{2,2},D(3\mathcal{D})_2,
D(3\mathcal{L})\}$ then 
$\underline{\mathrm{End}}_\Lambda(M(B,\mu,1))\cong k$ if and only if  $\mu^2\neq-1$.
\item[(iv)] If $\Lambda\in\{D(3\mathcal{B})_{2,1},D(3\mathcal{Q})\}$ then 
$\underline{\mathrm{End}}_\Lambda(M(B,\mu,1))\cong k$ if and only if  $\mu^2\neq 1$.
\end{itemize}
In all cases, if $\underline{\mathrm{End}}_\Lambda(M(B,\mu,1))\cong k$ then
$R(\Lambda,M(B,\mu,1))\cong k[[t]]$.
\end{prop}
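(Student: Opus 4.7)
The plan is to combine Krause's combinatorial description \cite{krau} of $\mathrm{Hom}$- and $\mathrm{Ext}^1$-spaces between string and band modules with an explicit case-by-case calculation of the syzygy $\Omega(M(B,\mu,m))$ read off from the relations in Figure \ref{fig:relations}. The key uniform input is that each $\Lambda$ admits a \emph{unique} band $B$, so the $1$-parameter family $\{M(B,\nu,1) : \nu\in k\cup\{\infty\}\}$ is stable under $\Omega$.

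First I would reduce to $m=1$. For $m\geq 2$ there is a filtration of $M(B,\mu,m)$ by band submodules $M(B,\mu,i)$ with $i<m$, and the resulting inclusions and projections yield elements of $\mathrm{End}_\Lambda(M(B,\mu,m))$ beyond the scalars whose images cannot all be absorbed into the ideal $P(V)$ of maps factoring through projectives. Krause's formulas make this explicit and give $\dim_k\underline{\mathrm{End}}_\Lambda(M(B,\mu,m))\geq 2$, so the stable endomorphism ring is not isomorphic to $k$ in this range.

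For $m=1$ I would proceed algebra by algebra. Writing out the projective cover of $M(B,\mu,1)$ explicitly, its kernel lies in the same $1$-parameter family and produces a formula $\Omega(M(B,\mu,1))\cong M(B,\mu',1)$ with $\mu'$ an explicit rational function of $\mu$ depending only on $\Lambda$. Setting $\mu=\mu'$ up to the natural band equivalence yields exactly the conditions in (i)--(iv): in case (i) one obtains $\mu=-\mu$, which fails precisely when $\mathrm{char}(k)\neq 2$ and $\mu\neq 0$; in case (ii) the twisted relations in characteristic $2$ force $\mu'=\mu$ identically; cases (iii) and (iv) give $\mu^2=-1$ and $\mu^2=1$ respectively. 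Simultaneously Krause's description displays $\mathrm{End}_\Lambda(M(B,\mu,1))$ as at most $2$-dimensional, spanned by the identity and a single ``band-shift'' endomorphism $\nu$; I would then check, by exhibiting an explicit factorization through the projective cover, that $\nu$ lies in $P(M(B,\mu,1))$ precisely when $\Omega(M(B,\mu,1))\not\cong M(B,\mu,1)$. The case-by-case identification of $\nu$ and of its projective factorization is the main computational obstacle.

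For the universal deformation ring when $\underline{\mathrm{End}}_\Lambda(M(B,\mu,1))\cong k$, I would verify using Krause's formulas that $\dim_k\mathrm{Ext}^1_\Lambda(M(B,\mu,1),M(B,\mu,1))=1$, forcing $R(\Lambda,M(B,\mu,1))$ to be a quotient of $k[[t]]$. For the matching lower bound I would construct a lift $L$ of $M(B,\mu,1)$ over $k[[t]]$ by replacing the scalar $\mu$ in the defining matrix presentation by $\mu+t$. Because the band relations depend polynomially on $\mu$, $L$ is a well-defined $k[[t]]\otimes_k\Lambda$-module, free over $k[[t]]$ and reducing to $M(B,\mu,1)$ modulo $t$, and $L/t^2L$ defines a non-trivial deformation over $k[[t]]/(t^2)$. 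The induced surjection $R(\Lambda,M(B,\mu,1))\to k[[t]]$ together with the upper bound then forces $R(\Lambda,M(B,\mu,1))\cong k[[t]]$.
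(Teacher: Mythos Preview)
Your overall strategy coincides with the paper's: reduce to $m=1$ via Krause, compute $\Omega(M(B,\mu,1))\cong M(B,\mu',1)$ case by case from the relations, identify the fixed points of $\mu\mapsto\mu'$, and for the deformation ring combine the one-dimensionality of $\mathrm{Ext}^1$ with the explicit lift obtained by replacing $\mu$ by $\mu+t$. That is precisely how the paper argues.

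There is, however, a genuine slip in your case (ii). For $\Lambda\in\{D(1)_1,D(2\mathcal{A})_1\}$ the twisted relation $\beta^2=\alpha\beta$ (resp.\ $\alpha^2=\alpha\gamma\beta$) shifts the syzygy by an additive constant: one finds $\Omega(M(B,\mu,1))\cong M(B,1-\mu,1)$, not $M(B,\mu,1)$. Since $\mathrm{char}(k)=2$ here, the equation $\mu=1-\mu$ has \emph{no} finite solutions, so $\Omega$ never fixes $M(B,\mu,1)$ for $\mu\in k$ and the stable endomorphism ring is $k$ for all such $\mu$---exactly the content of (ii). Your assertion that the twist forces $\mu'=\mu$ identically gives the opposite conclusion and would contradict the proposition. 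A related omission occurs at the boundary of the parameter space in case (i): for $\Lambda=D(1)_c$ the module $M(\beta)=M(B,\infty,1)$ is always $\Omega$-fixed, so ``$\mu\neq 0$'' alone does not yield the condition $\mu\in k^*$; you need to treat $\mu=\infty$ (and $\mu=0$) separately, as the paper does.
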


\begin{proof}
By \cite{krau}, it follows in all cases that if $M$ is a module in a 1-tube whose stable 
endomorphism ring is isomorphic to $k$, then $M$ has to belong to the boundary of the 1-tube.

Suppose first that $Q$ has a unique vertex, i.e. $\Lambda=D(1)_c$ for $c\in\{0,1\}$, where 
$c=1$ only occurs when $\mathrm{char}(k)=2$. Then $B=\beta\alpha^{-1}$ and 
$M(\beta)=M(B,\infty,1)$ always satisfies $\Omega(M(\beta))\cong
M(\beta)$ and $\underline{\mathrm{End}}_\Lambda(M(\beta))\not\cong k$.
If  $c=0$ and $\mathrm{char}(k)=2$ then $\Omega(M(B,\mu,1))\cong M(B,\mu,1)$, and 
$\underline{\mathrm{End}}_\Lambda(M(B,\mu,1))\not\cong k$ for all $\mu\in k$.
If $c=0$ and $\mathrm{char}(k)\neq 2$ then $\Omega(M(B,\mu,1))\cong M(B,-\mu,1)$, and 
$\underline{\mathrm{End}}_\Lambda(M(B,\mu,1))\cong k$ if and only if $\mu\in k^*$. 
If $c=1$ then $\Omega(M(B,\mu,1))\cong M(B,1-\mu,1)$, and 
$\underline{\mathrm{End}}_\Lambda(M(B,\mu,1))\cong k$ if and only if $\mu\in k$. 
In all cases when $\underline{\mathrm{End}}_\Lambda(M(B,\mu,1))\cong k$, we have
$\mathrm{Ext}^1_\Lambda(M(B,\mu,1),M(B,\mu,1))\cong k$, which means that
$R(\Lambda,M(B,\mu,1))$ is isomorphic to a quotient algebra of $k[[t]]$. Define
$L_\mu$ to be the free $k[[t]]$-module of rank 2 with an ordered basis $\{B_0,B_1\}$ and define a 
$\Lambda$-module structure on $L_\mu$ by letting (the image of) $\alpha$ (resp. $\beta$)
act on $\{B_0,B_1\}$ by $X_\alpha=E_{01}$ (resp. $X_\beta=(\mu+t)E_{01}$).
Here $E_{01}$ denotes the $2\times 2$ matrix which sends $B_1$ to $B_0$ and $B_0$ 
to 0. Then $L_\mu$ is a $k[[t]]\otimes_k\Lambda$-module which is free as a 
$k[[t]]$-module, $L_\mu/tL_\mu\cong M(B,\mu,1)$ as $\Lambda$-modules, and $L_\mu/t^2L_\mu$ 
defines a non-trivial lift of $M(B,\mu,1)$ over $k[[t]]/(t^2)$. This implies that 
$R(\Lambda,M(B,\mu,1))\cong k[[t]]$.

Next suppose that $Q$ has two vertices, i.e. $\Lambda=D(2\mathcal{A})_c$ for $c\in\{0,1\}$, 
where $c=1$ only occurs when $\mathrm{char}(k)=2$. Then $B=\alpha\beta^{-1}\gamma^{-1}$.
If  $c=0$ and $\mathrm{char}(k)=2$ then $\Omega(M(B,\mu,1))\cong M(B,\mu,1)$, and 
$\underline{\mathrm{End}}_\Lambda(M(B,\mu,1))\not\cong k$ for all $\mu\in k$.
If $c=0$ and $\mathrm{char}(k)\neq 2$ then $\Omega(M(B,\mu,1))\cong M(B,-\mu,1)$, and 
$\underline{\mathrm{End}}_\Lambda(M(B,\mu,1))\cong k$ if and only if $\mu\in k^*$. 
If $c=1$ then $\Omega(M(B,\mu,1))\cong M(B,1-\mu,1)$, and 
$\underline{\mathrm{End}}_\Lambda(M(B,\mu,1))\cong k$ if and only if $\mu\in k$. 
In all cases when $\underline{\mathrm{End}}_\Lambda(M(B,\mu,1))\cong k$, we have
$\mathrm{Ext}^1_\Lambda(M(B,\mu,1),M(B,\mu,1))\cong k$, which means that
$R(\Lambda,M(B,\mu,1))$ is isomorphic to a quotient algebra of $k[[t]]$. Define
$L_\mu$ to be the free $k[[t]]$-module of rank 4 with an ordered basis $\{B_0,B_1,B_2,B_3\}$ 
and define a $\Lambda$-module structure on $L_\mu$ by letting (the image of) $\alpha$ (resp. 
$\beta$, resp. $\gamma$) act on $\{B_0,B_1,B_2,B_3\}$ by $X_\alpha=(\mu+t)E_{01}$ 
(resp. $X_\beta=E_{21}$, resp. $X_\gamma=E_{02}$).
Here $E_{ji}$ denotes the $3\times 3$ matrix which sends $B_i$ to $B_j$ and all other basis
elements to 0. Then $L_\mu$ is a $k[[t]]\otimes_k\Lambda$-module which is free as a 
$k[[t]]$-module, $L_\mu/tL_\mu\cong M(B,\mu,1)$ as $\Lambda$-modules, and $L_\mu/t^2L_\mu$ 
defines a non-trivial lift of $M(B,\mu,1)$ over $k[[t]]/(t^2)$. This implies that 
$R(\Lambda,M(B,\mu,1))\cong k[[t]]$.

Next suppose that $Q$ has precisely three vertices and that $\Lambda\in
\{D(3\mathcal{A})_1,D(3\mathcal{K})\}$. If $\Lambda=D(3\mathcal{A})_1$ then
$B=\beta\gamma\delta^{-1}\eta^{-1}$, and if $\Lambda=D(3\mathcal{K})$ then
$B=\beta\kappa^{-1}\delta\gamma^{-1}\lambda\eta^{-1}$.
If $\mathrm{char}(k)=2$ then $\Omega(M(B,\mu,1))\cong M(B,\mu,1)$, and 
$\underline{\mathrm{End}}_\Lambda(M(B,\mu,1))\not\cong k$ for all $\mu\in k^*$.
If $\mathrm{char}(k)\neq 2$ then $\Omega(M(B,\mu,1))\cong M(B,-\mu,1)$, and 
$\underline{\mathrm{End}}_\Lambda(M(B,\mu,1))\cong k$ if and only if $\mu\in k^*$. 
In all cases when $\underline{\mathrm{End}}_\Lambda(M(B,\mu,1))\cong k$, it follows 
similarly to the case $D(2\mathcal{A})_c$ that $R(\Lambda,M(B,\mu,1))\cong k[[t]]$.

Next suppose that $Q$ has precisely three vertices and that $\Lambda\in
\{D(3\mathcal{A})_2,D(3\mathcal{B})_{2,2},D(3\mathcal{D})_2,
D(3\mathcal{L})\}$. If $\Lambda=D(3\mathcal{A})_2$ then
$B=\beta\gamma\delta^{-1}\eta^{-1}$, if $\Lambda=D(3\mathcal{B})_{2,2}$ then
$B=\alpha\beta^{-1}\gamma^{-1}$, if $\Lambda=D(3\mathcal{D})_2$ then
$B=\alpha\beta^{-1}\eta\xi^{-1}\delta\gamma^{-1}$, and if $\Lambda=D(3\mathcal{L})$
then $B=\alpha\beta^{-1}\delta^{-1}\lambda^{-1}$.
We have $\Omega(M(B,\mu,1))\cong M(B,-1/\mu,1)$, and 
$\underline{\mathrm{End}}_\Lambda(M(B,\mu,1))\cong k$ if and only if $\mu^2\neq -1$. 
If $\mu^2\neq -1$ then it follows 
similarly to the case $D(2\mathcal{A})_c$ that $R(\Lambda,M(B,\mu,1))\cong k[[t]]$.

Finally suppose that $Q$ has precisely three vertices and that $\Lambda\in
\{D(3\mathcal{B})_{2,1},D(3\mathcal{Q})\}$. If $\Lambda=D(3\mathcal{B})_{2,1}$ then
$B=\alpha\beta^{-1}\eta\delta\gamma^{-1}$, if $\Lambda=D(3\mathcal{Q})$
then $B=\alpha\beta^{-1}\rho\delta^{-1}\lambda^{-1}$.
We have $\Omega(M(B,\mu,1))\cong M(B,1/\mu,1)$, and 
$\underline{\mathrm{End}}_\Lambda(M(B,\mu,1))\cong k$ if and only if $\mu^2\neq 1$. 
If $\mu^2\neq 1$ then it follows 
similarly to the case $D(2\mathcal{A})_c$ that $R(\Lambda,M(B,\mu,1))\cong k[[t]]$.
\end{proof}

\subsection{Blocks which are representation infinite of polynomial growth}

Let $G$ be a finite group and let $B$ be a block of $kG$ which is representation infinite of
polynomial growth.
By \cite[Thm. 4.1]{erdskow}, this occurs if and only if $\mathrm{char}(k)=2$ and
the defect groups of $B$ are Klein four groups. In this case,
$B$ is Morita equivalent to either $D(1)_0$, or $D(3\mathcal{A})_1$, or $D(3\mathcal{K})$.
Using Propositions \ref{prop:nonperiodic}, \ref{prop:3tubes} and \ref{prop:1tubes}, we
obtain the following result.

\begin{cor}
\label{cor:blockcase}
Let $\Lambda=kQ/I$ be a basic algebra of dihedral type of polynomial growth.
Then $R(\Lambda,V)$ is finite dimensional over $k$ for all
finitely generated $\Lambda$-modules $V$ with $\underline{\mathrm{End}}_\Lambda(V)\cong k$
if and only if $\mathrm{char}(k)=2$ and $\Lambda$ is Morita equivalent to either
$D(1)_0$, or $D(2\mathcal{A})_0$, or $D(3\mathcal{A})_1$, or $D(3\mathcal{K})$.

In particular, suppose $G$ is a finite group and $B$ is a block of $kG$.
Then $B$ is representation infinite of polynomial growth if and only if
$B$ is Morita equivalent to an algebra $\Lambda$ of dihedral type of polynomial growth
and $R(\Lambda,V)$ is finite dimensional over $k$ for all finitely generated 
$\Lambda$-modules $V$ with $\underline{\mathrm{End}}_{\Lambda}(V)\cong k$.
\end{cor}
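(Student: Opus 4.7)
My plan is to reduce everything to a bookkeeping exercise over the three structural propositions already proved (Propositions \ref{prop:nonperiodic}, \ref{prop:3tubes}, \ref{prop:1tubes}), since together they enumerate \emph{every} isomorphism type that can arise as $R(\Lambda,V)$ for a module with stable endomorphism ring $k$. Inspecting the statements, the possible isomorphism types are $k$, $k[[t]]/(t^2)$, $\Lambda$, and $k[[t]]$; of these only $k[[t]]$ fails to be finite dimensional over $k$. So the assertion $\dim_kR(\Lambda,V)<\infty$ for all qualifying $V$ is equivalent to saying that no qualifying $V$ has $R(\Lambda,V)\cong k[[t]]$. (Morita invariance of versal deformation rings, cited from \cite[Prop.~2.5]{bv}, lets me reduce to the basic case throughout.)

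For the first biconditional I will walk through the twelve Morita equivalence classes. The implication $(\Leftarrow)$ is immediate: if $\mathrm{char}(k)=2$ and $\Lambda\in\{D(1)_0,D(2\mathcal{A})_0,D(3\mathcal{A})_1,D(3\mathcal{K})\}$, then Proposition \ref{prop:nonperiodic} contributes only $k$, $k[[t]]/(t^2)$, or $\Lambda$ itself; Proposition \ref{prop:3tubes}(ii)--(iii) contributes only $k$; and Proposition \ref{prop:1tubes}(i) tells us that \emph{no} band module on the boundary of a $1$-tube has stable endomorphism ring $k$ in characteristic $2$, so the $1$-tubes contribute nothing to the list. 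For the implication $(\Rightarrow)$ I produce a $V$ with $R(\Lambda,V)\cong k[[t]]$ in every other case: (a) if $\Lambda$ is in the six-algebra family $\{D(3\mathcal{A})_2,D(3\mathcal{B})_{2,1},D(3\mathcal{B})_{2,2},D(3\mathcal{D})_2,D(3\mathcal{L}),D(3\mathcal{Q})\}$, the module $T_2$ of Proposition \ref{prop:3tubes}(iv) works; (b) if $\Lambda\in\{D(1)_1,D(2\mathcal{A})_1\}$ (automatically $\mathrm{char}(k)=2$), any $\mu\in k$ works by Proposition \ref{prop:1tubes}(ii); and (c) if $\Lambda\in\{D(1)_0,D(2\mathcal{A})_0,D(3\mathcal{A})_1,D(3\mathcal{K})\}$ but $\mathrm{char}(k)\neq 2$, any $\mu\in k^*$ works by Proposition \ref{prop:1tubes}(i).

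For the block statement I combine the above with \cite[Thm.~4.1]{erdskow}. For $(\Rightarrow)$, that theorem says $B$ is representation infinite of polynomial growth iff $\mathrm{char}(k)=2$ and the defect groups of $B$ are Klein four, in which case $B$ is Morita equivalent to $D(1)_0$, $D(3\mathcal{A})_1$, or $D(3\mathcal{K})$; the finite-dimensionality then follows from the first part of the corollary. For $(\Leftarrow)$, suppose $B$ is Morita equivalent to a polynomial-growth algebra $\Lambda$ of dihedral type with the finite-dimensional deformation ring property. The first part of the corollary forces $\mathrm{char}(k)=2$ and $\Lambda\in\{D(1)_0,D(2\mathcal{A})_0,D(3\mathcal{A})_1,D(3\mathcal{K})\}$; but by Brauer's classification \cite{brauer2} a block has $1$ or $3$ isomorphism classes of simples, ruling out $D(2\mathcal{A})_0$. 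Hence $B$ is Morita equivalent to one of $D(1)_0$, $D(3\mathcal{A})_1$, $D(3\mathcal{K})$, and \cite[Thm.~4.1]{erdskow} (applied in reverse) concludes that $B$ is representation infinite of polynomial growth.

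The only genuine content beyond invocation of the propositions is the bookkeeping in the second paragraph, so I do not expect a serious obstacle; the place to be careful is the characteristic-$2$ versus general-characteristic split in Proposition \ref{prop:1tubes}(i), which is precisely what distinguishes the four ``good'' algebras from their siblings $D(1)_1$ and $D(2\mathcal{A})_1$ (which exist only in characteristic $2$ and which \emph{do} admit infinite-dimensional universal deformation rings).
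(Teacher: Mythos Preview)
Your proof of the first biconditional is correct and is exactly the case analysis the paper has in mind when it says the corollary follows from Propositions \ref{prop:nonperiodic}, \ref{prop:3tubes} and \ref{prop:1tubes}; the key observation that in characteristic $2$ part (i) of Proposition \ref{prop:1tubes} produces \emph{no} modules with stable endomorphism ring $k$ (so no $k[[t]]$ appears) is precisely the content driving the equivalence.

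Your argument for the block statement $(\Leftarrow)$, however, is unnecessarily roundabout and contains a gap. You invoke \cite{brauer2} to assert that ``a block has $1$ or $3$ isomorphism classes of simples,'' but Brauer's result only says that $2$-blocks with dihedral defect groups have \emph{at most} three simples, and at this point you have not established anything about the defect groups of $B$. Likewise, applying \cite[Thm.~4.1]{erdskow} ``in reverse'' would require knowing that $B$ has Klein four defect groups, which again you have not shown. The direct route is far simpler: once you assume $B$ is Morita equivalent to an algebra $\Lambda$ of dihedral type of polynomial growth, you are done immediately, since Morita equivalence preserves both representation-infiniteness and polynomial growth, and every algebra of dihedral type is representation infinite. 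You do not need the deformation-ring hypothesis for this direction at all, and you do not need to exclude $D(2\mathcal{A})_0$. (The paper's remark that $D(2\mathcal{A})_0$ is never Morita equivalent to a block is a separate observation appended \emph{after} the corollary, not part of its proof; the paper's argument for it goes through the order of the defect group rather than the number of simples.)
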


Note that $D(2\mathcal{A})_0$ is not Morita equivalent to a block of a group ring in characteristic 
2. The reason is that if it were Morita equivalent to a block $B$, then the defect groups
of $B$ would have to have cardinality 4 and hence they would have to be Klein four groups.
However, all blocks with Klein four defect groups are Morita equivalent to either $D(1)_0$, or 
$D(3\mathcal{A})_1$, or $D(3\mathcal{K})$.


\section{Appendix: String and band modules}
\label{s:stringband}

Let $k$ be an algebraically closed field, and let $\Lambda=kQ/I$ be a basic algebra of
dihedral type of polynomial growth as in Figure \ref{fig:relations}. Then 
$\overline{\Lambda}=\Lambda/\mathrm{soc}(\Lambda)=kQ/J$ is a string algebra.
In particular, all non-projective indecomposable $\Lambda$-modules are inflated from
string and band modules for $\overline{\Lambda}$. 
In this appendix, we give a brief introduction to these string and band modules.
For more details, see \cite{buri}.

For each arrow $\zeta$ in $Q$ with starting vertex $s(\zeta)$ and end vertex $e(\zeta)$,
we define a formal inverse $\zeta^{-1}$ with $s(\zeta^{-1})=e(\zeta)$ and $e(\zeta^{-1})=s(\zeta)$.
A word $w$ is a sequence $w_1\cdots w_n$, where $w_i$ is either an arrow or a formal inverse 
such that $s(w_i)=e(w_{i+1})$ for $1\leq i \leq n-1$. Define $s(w)=s(w_n)$, $e(w)=e(w_1)$ and 
$w^{-1}=w_n^{-1}\cdots w_1^{-1}$. For each vertex $u$ in $Q$, there is also an empty word
$1_u$ of length $0$ with $e(1_u)=u=s(1_u)$ and $(1_u)^{-1}=1_u$. 
Denote the set of all words by $\mathcal{W}$.

\begin{dfn}
\label{def:string}
Define an equivalence relation $\sim$ on
$\mathcal{W}$ by $w\sim w'$ if and only if $w=w'$ or $w^{-1}=w'$. 
A string is a representative $w\in{\mathcal{W}}$ of an equivalence class under 
$\sim_s$ with the following property: Either $w=1_u$ for some vertex $u$ in $Q$, or 
$w=w_1\cdots w_n$ for some $n\ge 1$
where $w_i \neq w_{i+1}^{-1}$ for $1\leq i\leq n-1$ and no subword of $w$ 
or its formal inverse belongs to $J$.

If $C=1_u$ for some vertex $u$ in $Q$, then the string module $M(1_u)$ is defined to be the 
simple $\Lambda$-module $S_u$ corresponding to $u$.
Let now $C=w_1\cdots w_n$ be a string of length $n\ge 1$, and
define $v(i)=e(w_{i+1})$ for $0\leq i\leq n-1$ and $v(n)=s(w_n)$. 
Then the string module $M(C)$ is defined to have an ordered $k$-basis $\{z_0,z_1,\ldots, z_n\}$ 
such that the $\Lambda$-action on $M(C)$ is given by letting (the images of) each vertex $u$ 
and each arrow $\zeta$ of $Q$ act on $\{z_0,z_1,\ldots, z_n\}$ as the following
$(n+1)\times(n+1)$ matrices $X_u$ and $X_\zeta$, respectively: 
$X_u$ sends $z_i$ to itself if $v(i)=u$ and $X_u$ sends $z_i$ to 0 otherwise;
whereas $X_\zeta$ sends $z_i$ to $z_{i-1}$ (resp. $z_{i+1}$) if $w_i=\zeta$ (resp. 
$w_{i+1}=\zeta^{-1}$) and $X_\zeta$ sends $z_i$ to 0 otherwise.
\end{dfn}

\begin{dfn}
\label{def:band}
For each $\Lambda=kQ/I$ as in Figure \ref{fig:relations}, there exists a unique band $B$ as 
follows:
\begin{itemize}
\item If $\Lambda=D(1)_c$, then $B=\beta\alpha^{-1}$.
\item If $\Lambda=D(2\mathcal{A})_c$, then $B=\alpha\beta^{-1}\gamma^{-1}$.
\item If $\Lambda\in\{D(3\mathcal{A})_1,D(3\mathcal{A})_2\}$, then 
$B=\beta\gamma\delta^{-1}\eta^{-1}$.
\item If $\Lambda=D(3\mathcal{B})_{2,1}$, then $B=\alpha\beta^{-1}\eta\delta\gamma^{-1}$.
\item If $\Lambda=D(3\mathcal{B})_{2,2}$, then $B=\alpha\beta^{-1}\gamma^{-1}$.
\item If $\Lambda=D(3\mathcal{D})_2$, then $B=\alpha\beta^{-1}\eta\xi^{-1}\delta\gamma^{-1}$.
\item If $\Lambda=D(3\mathcal{K})$, then 
$B=\beta\kappa^{-1}\delta\gamma^{-1}\lambda\eta^{-1}$
\item If $\Lambda=D(3\mathcal{L})$, then $B=\alpha\beta^{-1}\delta^{-1}\lambda^{-1}$.
\item If $\Lambda=D(3\mathcal{Q})$, then $B=\alpha\beta^{-1}\rho\delta^{-1}\lambda^{-1}$.
\end{itemize}
Let $B=w_1\cdots w_n$ be the band for $\Lambda=kQ/I$ as above, and
define $v(i)=e(w_{i+1})$ for $0\leq i\leq n-1$. Then for each integer $m>0$ and each 
$\mu\in k^*$, the band module $M(B,\mu,m)$ is defined to have an ordered $k$-basis 
\begin{equation}
\label{eq:orderedband}
\{z_{0,1},z_{0,2},\ldots, z_{0,m},z_{1,1},\ldots,z_{1,m},\ldots, z_{n-1,1},\ldots,z_{n-1,m}\}
\end{equation}
such that the $\Lambda$-action on $M(B,\mu,m)$ is given by letting (the images of) each vertex 
$u$ and each arrow $\zeta$ of $Q$ act on the basis in (\ref{eq:orderedband}) as the following
$nm\times nm$ matrices $X_u$ and $X_\zeta$, repsectively: 
$X_u$ sends $z_{i,j}$ to itself if $v(i)=u$ and $X_u$ sends $z_{i,j}$ to 0 otherwise;
whereas $X_\zeta$ sends $z_{i,j}$ to $\mu\,z_{0,j}+z_{0,j+1}$ (resp. $z_{i-1,j}$, resp.
$z_{i+1,j}$) if $w_i=\zeta$ and $i=1$ (resp. $w_i=\zeta$ and $i>1$, resp. $w_{i+1}=\zeta^{-1}$)
and $X_\zeta$ sends $z_{i,j}$ to 0 otherwise. Note that we set 
$z_{0,m+1}=0=z_{1,m+1}$ and $z_{n,j}=z_{0,j}$ for all $j$.
\end{dfn}


\end{document}